\newcommand{\R}{\ensuremath{\mathbb{R}}}
\newcommand{\N}{\ensuremath{\mathbb{N}}}
\newcommand{\C}{C^1(K)}
\newcommand{\Cr}[1][d]{C^1(\R^{#1} | K)}
\newcommand{\Cro}{\ensuremath{{C}^1(\R | K)}}
\newcommand{\J}{\ensuremath{\mathcal{J}^1(K)}}
\newcommand{\E}{\ensuremath{\mathscr{E}^1(K)}}
\DeclareMathOperator{\supp}{supp}
\DeclareMathOperator{\dive}{div}
\DeclareMathOperator{\inte}{int}
\newcommand{\vol}{\ensuremath{\text{vol}}}
\newcommand{\Dd}{\mathscr{D}}
\newcommand{\eps}{\varepsilon}
\newcommand{\<}{\langle}
\renewcommand{\>}{\rangle}
\theoremstyle{definition}
\newtheorem{Def}{Definition}[section]
\newtheorem{Ex}[Def]{Example}
\newtheorem{Rmk}[Def]{Remark}
\theoremstyle{plain}
\newtheorem{Prop}[Def]{Proposition}
\newtheorem{Thm}[Def]{Theorem}
\title{Continuously differentiable functions on compact sets}
\author{Leonhard Frerick}
\address{Universit\"at Trier, FB IV  Mathematik, D-54286 Trier, Germany}
\email{frerick@uni-trier.de}
\author{Laurent Loosveldt}
\address{Universit\'e de Li\`ege, D\'epartement de math\'ematique -- zone Polytech 1, 12 all\'ee de la D\'ecouverte, B\^at. B37, B-4000 Li\`ege, Belgium}
\email{L.Loosveldt@uliege.be}
\author{Jochen Wengenroth}
\address{Universit\"at Trier, FB IV  Mathematik, D-54286 Trier, Germany}
\email{wengenroth@uni-trier.de}
\begin{document}

\begin{abstract}
We consider the space $\C$ of real-valued continuously differentiable functions on a compact set $K\subseteq \R^d$. We characterize the completeness of this space and 
prove that the restriction space $C^1(\R^d|K)=\{f|_K: f\in C^1(\R^d)\}$ is always dense in $\C$. The space $\C$ is then compared with other spaces of differentiable 
functions on compact sets.
\end{abstract}
\subjclass[2010]{46E10, 46E15, 26B35, 28B05 }
\keywords{Differentiability on compact sets, Whitney jets}

\maketitle
\section{Introduction}
In most analysis textbooks differentiability is only treated for functions on open domains and, if needed, e.g., for the divergence theorem, an ad hoc 
generalization for functions on compact sets is given. We propose instead to  define differentiability on arbitrary sets as the usual affine-linear 
approximability -- the price one has to pay is then the definite article: Instead of {\it the} derivative there can be many. We will only consider compact  
domains in order to have a natural norm on our space. The results easily extended to $\sigma$-compact (and, in particular, closed) sets.

 An $\R^n$-valued function $f$ on a compact set $K\subseteq \R^d$ is said to belong $C^1(K,\R^n)$ if there exits a continuous function $df$ on $K$ 
 with values in the linear maps from $\R^d$ to $\R^n$ such that, for all $x \in K$,
 \begin{align} \label{differ}
 \lim_{\substack{y \to x\\ y \in K}} \frac{f(y)-f(x) - df(x) ( y-x )}{|y-x|} =0, 
 \end{align}
where $|\cdot|$ is the euclidean norm. For $n=1$ we often identify $\R^d$ with its dual
 and write $\langle \cdot , \cdot \rangle $ for the evaluation which is then the scalar product. 
 Questions about $C^1(K,\R^n)$ easily reduce to the case  $\C=C^1(K,\R)$.

Of course, equality (\ref{differ}) means that $df$ is a continuous (Fr\'echet) derivative of $f$ on $K$. As in the case of open domains,  every $f\in \C$ is 
continuous and we have the chain rule: For all (continuous) derivatives $df$ of $f$ on $K$ and $dg$ of $g$ on $f(K)$ the map $x \mapsto dg(f(x))\circ df(x)$ 
is a (continuous) derivative of $g\circ f$ on $K$.

    In general, a derivative need not be unique. For this reason, a good tool to study $\C$ is the jet space
\begin{align*}
\J = \{ (f,df): df \text{ is a continuous derivative of $f$ on $K$} \}
\end{align*}
endowed with the norm
\begin{align*}
\|(f,df)\|_{\mathcal{J}^1(K)} = \|f\|_K + \|df \|_K,
\end{align*}  
where $\|\cdot\|_K$ is the uniform norm on $K$ and $|df(x)|=\sup\{|df(x)(v)|:|v|\le 1\}$. For the projection $\pi(f,df)=f$ we have
$\C=\pi(\J)$, and we equip $\C$ with the quotient norm, i.e.,
\begin{align*}
\| f \|_{\C} = \|f\|_K + \inf \{ \|df\|_K: df  \text{ is a continuous derivative of $f$ on $K$} \}.
\end{align*}

It seems that the space $\C$ did not get much attention in the literature. This is in sharp
contrast to the ``restriction space'' $\Cr=\{f|_K: f\in C^1(\R^d)\}$. Obviously, the inclusion $\Cr \subseteq \C$ holds but it is well-known that, in general, 
it is strict. Simple examples are domains with inward directed cusps like
\begin{align*}
K=\{(x,y) \in [-1,1]^2 : |y| \geq e^{-1/x} \text{ for }  x > 0 \}.
\end{align*}
The function $f(x,y)=e^{-1/2x}$ for $x,y>0$ and $f(x,y)=0$ elsewhere is  in $\C$ but it is not the restriction of a $C^1$-function on $\R^2$ because is 
is not Lipschitz continuous near the origin.
 
In a famous paper from 1934 \cite{Whi34}, Whitney proved that $\Cr= \pi(\E)$ where $\E$ is the spaces of jets $(f,df)$ for which the limit (\ref{differ}) 
is uniform in $x \in K$. Moreover, $\E$ endowed with the norm
\[
\|(f,df)\|_{\E} = \|(f,df)\|_{\J}+ \sup\left\{ \frac{|f(y)-f(x)|}{|y-x|}: x,y\in K, y \neq x\right\}
\]
is a Banach space. Thus, $\Cr$ equipped with the quotient norm \linebreak $\|\cdot \|_{\Cr}$ inherited from $\|\cdot \|_{\E}$ is also a Banach space.

Since their introduction, Whitney jets (also of higher orders) have been widely studied, in particular in the context of extension operators \cite{Fef05,Fre07,Fre11,Fre16}. 
Generalizations of them have been defined in various contexts such as Baire functions \cite{Koc12}, holomorphic functions \cite{Bru03} Sobolev spaces \cite{Zob98, Zob99}, 
so-called $C^{m,\omega}(\R^d)$ spaces \cite{Fef05a} or (generalized) H\"older spaces \cite{Loo20}.

In this paper, we prove that $\E$ is always a dense subset of $\J$. The density of $\Cr$ in $\C$ is then an immediate consequence. Together with a characterization 
of the completeness of $(\C,\|\cdot \|_{\C})$, this leads to a simple geometric criterion for the equality $\C=\Cr$ as Banach spaces. In the one-dimensional case, 
we also give a characterization of the mere algebraic equality.

If the compact set $K$ is topologically regular, i.e., the closure of its interior, another common way to define  differentiability is the space
\begin{align*}
{C}^1_{\inte}(K)= \{ f \in C^1(\mathring{K}): f \text{ and } df \text{ extend continously to } K \},
\end{align*}
see for instance \cite{Fol95,Zie89}.

In this situation, the derivative of a continuously differentiable function on $K$ is uniquely determined by the function, which means that 
the projection $\pi$ is injective on $\J$ and therefore $\C$ and $\J$ as well as $\Cr$ and $\E$, respectively, can be identified.

Equipped with the norm $\|f\|_K + \|df\|_K$, it is clear that ${C}^1_{\inte}(K)$ is always a Banach space. Despite this nice aspect we will see by 
an example of Sauter \cite{Sau} that ${C}^1_{\inte}(K)$ has dramatic drawbacks: The chain rule fails in this setting and compositions of $C^1_{\inte}(K)$-functions 
need not be differentiable. 

We will present some results about equalities between $C^1_{\inte}(K)$, $\Cr$ and $\C$, giving an echo to the so-called ``Whitney conjecture'' ( \cite{Zob99,Whi34c}).

\bigskip 

The paper is organized as follows. In section 2, we start with some more or less standard facts about rectifiable paths and 
integration along them to establish the fundamental 
theorem of calculus for $\C$-functions, and we present the above mentioned example of Sauter of $C^1_{\inte}$-functions where this result fails. In Section 3 we 
characterize the completeness of 
$\C$ by a simple geometric condition, and in Section 4, we prove the density of $\Cr$, which relies on very deep results of Smirnov \cite{Smi94}. 
In Section 5, we compare the spaces $\Cr$, $\C$ and $C^1_{\inte}(K)$ and finally, we give some complementary specific results for compact subsets of $\R$.

\section{Path integrals}

A function $f\in \C$ need not be Lipschitz continuous because segments with endpoints in $K$, to which one would like to apply the mean value theorem, 
need not be contained in $K$. Instead of segments one then has to consider rectifiable paths in $K$, i.e., continuous functions $\gamma:[a,b]\to K$ such that the length
\begin{align*}
L(\gamma ) = \sup \left\{ \sum_{j=1}^{n} |\gamma(t_j)-\gamma(t_{j-1})|: a=t_0< \cdots < t_n =b \right \} 
\end{align*}
is finite. The function $\ell(t)=L(\gamma|_{[a,t]})$ is then continuous: Given $\varepsilon>0$ and a partition such that the length of the corresponding polygon is bigger than
$L(\gamma)-\varepsilon$ every interval $[r,s]$ lying between two consecutive points of the partition satisfies $\ell(s)-\ell(r)=L(\gamma_{[r,s]})\le |\gamma(s)-\gamma(r)|+\varepsilon$.
For the minimal length of the subintervals of the partition one then easily gets the required continuity estimate.

\begin{Prop}[Mean value inequality]
Let $f \in \C$ and $x, y \in K$. If $df$ is a derivative of $f$ on $K$ and if $x$ and $y$ are joined by a rectifiable path $\gamma  :  [a,b] \to K$, then
\begin{align}\label{tafine}
|f(y)-f(x)| \leq L(\gamma) \sup \{ |d f(z)|: z \in \gamma([a,b]) \}.
\end{align}
\end{Prop}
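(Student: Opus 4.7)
The plan is to reduce to estimating $|f(\gamma(b))-f(\gamma(a))|$ and to compare it with the arc-length function $\ell(t)=L(\gamma|_{[a,t]})$ by a continuity / sup argument in the spirit of the classical proof of the mean value inequality. The crucial geometric input is that for all $a\le s\le t\le b$ one has $|\gamma(t)-\gamma(s)|\le \ell(t)-\ell(s)$, since any polygonal approximation of $\gamma|_{[s,t]}$ starting with the chord $[\gamma(s),\gamma(t)]$ shows the chord length is bounded by the total length. Combined with the continuity of $\ell$ recalled in the preceding remark, this turns a local estimate in the target into a local estimate against the parameter.

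Fix $\varepsilon>0$ and set $M=\sup\{|df(z)|:z\in\gamma([a,b])\}$. For each $t_0\in[a,b]$, the differentiability of $f$ at $z_0=\gamma(t_0)$ yields a neighbourhood $U$ of $z_0$ in which
\[
|f(y)-f(z_0)|\le (|df(z_0)|+\varepsilon)\,|y-z_0|\le (M+\varepsilon)\,|y-z_0|\qquad(y\in K\cap U).
\]
By continuity of $\gamma$ there is $\delta_{t_0}>0$ such that $\gamma(t)\in U$ whenever $|t-t_0|<\delta_{t_0}$, and then the geometric inequality above gives
\[
|f(\gamma(t))-f(\gamma(t_0))|\le (M+\varepsilon)\,|\ell(t)-\ell(t_0)|.
\]

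Now let $T=\sup\{t\in[a,b]:\ |f(\gamma(s))-f(\gamma(a))|\le (M+\varepsilon)\ell(s)\text{ for all }a\le s\le t\}$. The set is non-empty (it contains $a$) and closed by continuity of $f\circ\gamma$ and $\ell$, so the defining inequality holds at $s=T$. If $T<b$, apply the local estimate at $t_0=T$: for $t\in(T,T+\delta_T)\cap[a,b]$, the triangle inequality gives
\[
|f(\gamma(t))-f(\gamma(a))|\le (M+\varepsilon)\ell(T)+(M+\varepsilon)(\ell(t)-\ell(T))=(M+\varepsilon)\ell(t),
\]
contradicting the definition of $T$. Hence $T=b$, so $|f(y)-f(x)|\le (M+\varepsilon)L(\gamma)$, and letting $\varepsilon\to 0$ yields \eqref{tafine}.

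The only delicate point is the passage from the pointwise differentiability hypothesis to a bound controlled by the arc length rather than by the straight-line distance; the chord inequality $|\gamma(t)-\gamma(t_0)|\le|\ell(t)-\ell(t_0)|$ is what makes this transition work and lets the constant $M+\varepsilon$ persist uniformly in the connectedness argument without any need to reparametrize by arc length.
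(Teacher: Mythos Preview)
Your proof is correct and follows essentially the same approach as the paper's: a ``creeping'' argument (attributed there to H\"ormander) where one considers the largest parameter $T$ for which $|f(\gamma(T))-f(\gamma(a))|\le c\,\ell(T)$ holds and shows $T=b$ using the local differentiability estimate together with the chord inequality $|\gamma(s)-\gamma(t)|\le \ell(s)-\ell(t)$. The only cosmetic difference is that the paper works directly with an arbitrary $c>M$ and the maximum of the closed set, while you phrase it as $M+\varepsilon$ and a supremum; the substance is identical.
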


\begin{proof} We essentially repeat H\"ormander's proof \cite[theorem 1.1.1]{Hor90}.
For each $c>\sup \{ |d f(z)|: z \in \gamma([a,b]) \}$ the set 
\(
  T=\{t\in[a,b]: |f(\gamma(t))-f(x)|\le c\ell(t)\}
\)  
  is non-empty and closed because of the continuity of $f\circ\gamma$ and 
$\ell$, hence is has a largest element $t\in[a,b]$. If $t$ were different from $b$, the differentiability of $f$ at $z=\gamma(t)$ gives a neighbourhood $U$ of $z$ such that
\[
  |f(z)-f(w)|\le |f(z)-f(w)-df(z)(z-w)|+|df(z)(z-w)| \le c|z-w|
\]
for all $w\in U$. By the continuity of $\gamma$ we find $s>t$  with $\gamma(s)\in U$ so that
\[
  |f(\gamma(s))-f(x)| \le |f(\gamma(s))-f(\gamma(t))|+ c\ell(t) \le c|\gamma(s)-\gamma(t)|+c\ell(t)\le c\ell(s),
\]
contradicting the maximality of $t$.
\end{proof}

The mean value inequality does not use the continuity of a derivative and has the usual consequences. 
For example, if $df=0$ is a derivative of $f$ and $K$ is {\it rectifiably pathwise connected} (a certainly self-explaining notion) then $f$ is constant. 

Our next aim is to show that a continuous
derivative integrates back to the function along rectifiable paths. We first recall the relevant notions. If $F:K\to \R^d$ is continuous
and $\gamma$ is a rectifiable path in $K$ we define the path integral $\int_\gamma F$ as the limit of Riemann-Stieltjes sums
\[
  \sum_{j=1}^n \< F(\gamma(\tau_j)), \gamma(t_j)-\gamma(t_{j-1})\>
\]
where $a=t_0<\ldots<t_n=b$ are partitions with $\max\{t_j-t_{j-1}:1\le j\le n\}\to 0$ and $t_{j-1 }\le \tau_j\le t_j$. 
The existence of the limit is seen from an appropriate Cauchy condition (or by using the better known one-dimensional case where rectifiable paths
are usually called functions of bounded variation). If $\gamma$ is even absolutely continuous, i.e., there is a Lebesgue integrable $\dot\gamma:[a,b]\to\R^d$ with
$\gamma(\beta)-\gamma(\alpha)=\int_\alpha^\beta \dot\gamma(t)dt$ for all $\alpha\le\beta$, one gets from the uniform continuity of $F\circ \gamma$ the familiar representation
\[
  \int_\gamma F=\int_a^b \<F(\gamma(t)),\dot\gamma(t)\>dt.
\]
If $\gamma$ is even continuously differentiable and $F=df$ for a function $f\in\C$, the integrand in the last formula is the derivative of $f\circ\gamma$ (by the chain rule)
and the fundamental theorem of calculus gives $\int_\gamma df= f(\gamma(b))-f(\gamma(a))$. Since continuous differentiability of $\gamma$ is a not a realistic assumption in our considerations 
(interesting phenomena typically occur for quite rough compact sets $K$) we need a more general version:

\begin{Thm}[Fundamental theorem of calculus]\label{thm:pathint}
For each $f \in \C$ with a continuous derivative $df$ and each rectifiable $\gamma:  [a,b] \to K$ we have
\begin{align}\label{pathint}
\int_\gamma df = f(\gamma(b))-f(\gamma(a)).
\end{align}
\end{Thm}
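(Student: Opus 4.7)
The plan is to show that for every tagged partition $a=t_0<\cdots<t_n=b$ with tags $\tau_j\in[t_{j-1},t_j]$, the Riemann--Stieltjes sum
\[
S=\sum_{j=1}^n \<df(\gamma(\tau_j)),\gamma(t_j)-\gamma(t_{j-1})\>
\]
differs from the telescoping sum $f(\gamma(b))-f(\gamma(a))=\sum_{j=1}^n [f(\gamma(t_j))-f(\gamma(t_{j-1}))]$ by an amount that tends to $0$ as the mesh of the partition shrinks. Since the Riemann--Stieltjes integral exists by assumption, this automatically identifies its value with $f(\gamma(b))-f(\gamma(a))$.

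To estimate the $j$-th term I would apply Proposition~2.1 (the mean value inequality) to the auxiliary function $g_j\in\C$ defined by $g_j(x)=f(x)-\<df(\gamma(\tau_j)),x\>$, which has continuous derivative $dg_j(x)=df(x)-df(\gamma(\tau_j))$, along the rectifiable subpath $\gamma|_{[t_{j-1},t_j]}$ connecting $\gamma(t_{j-1})$ and $\gamma(t_j)$. This yields
\[
\bigl| f(\gamma(t_j))-f(\gamma(t_{j-1}))-\<df(\gamma(\tau_j)),\gamma(t_j)-\gamma(t_{j-1})\>\bigr|
\le L\bigl(\gamma|_{[t_{j-1},t_j]}\bigr)\cdot \omega_j,
\]
where $\omega_j=\sup\{|df(\gamma(s))-df(\gamma(\tau_j))|:s\in[t_{j-1},t_j]\}$. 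Summing in $j$ and using $\sum_j L(\gamma|_{[t_{j-1},t_j]})=L(\gamma)$ gives
\[
\bigl| f(\gamma(b))-f(\gamma(a))-S\bigr|\le L(\gamma)\cdot \max_{1\le j\le n}\omega_j.
\]

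To finish, I would invoke the uniform continuity of $df\circ\gamma$ on the compact interval $[a,b]$: for any $\eps>0$ there is $\delta>0$ so that $|s-t|<\delta$ implies $|df(\gamma(s))-df(\gamma(t))|<\eps$, and hence $\max_j\omega_j<\eps$ whenever the mesh of the partition is less than $\delta$ (so that $|s-\tau_j|<\delta$ for every $s\in[t_{j-1},t_j]$). The case $L(\gamma)=0$ is trivial since then $\gamma$ is constant.

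The main obstacle is purely conceptual rather than technical: one cannot directly invoke the chain rule because $\gamma$ is only rectifiable, not differentiable, so the standard argument $\int_a^b (f\circ\gamma)'\,dt=f(\gamma(b))-f(\gamma(a))$ is unavailable. The trick is to replace the missing differentiability of $\gamma$ by a ``piecewise linearization'' at the level of the integrand: freezing $df$ at the tag $\tau_j$ converts each subarc estimate into an application of Proposition~2.1, after which the uniform continuity of $df\circ\gamma$ controls the aggregate error.
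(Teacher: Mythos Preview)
Your proof is correct and follows essentially the same route as the paper: apply the mean value inequality to the auxiliary function obtained by subtracting the affine map with slope $df(\gamma(\tau_j))$ on each subarc, then sum using the additivity of length and the uniform continuity of $df\circ\gamma$. The only cosmetic difference is that the paper fixes the tag at the right endpoint $\tau_j=t_j$, whereas you allow an arbitrary tag; the argument is otherwise identical.
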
 

\begin{proof}
  Given a partition $a=t_0<\ldots<t_n=b$ and a fixed $j\in\{1,\ldots,n\}$ we set $z=\gamma(t_j)$ and apply the mean value inequality to the function
  \[
    g(x)=f(x)-f(z)-\<df(z),x-z\>
  \]
on $\gamma([t_{j-1},t_j])$. Since $dg(x)=df(x) -df(z)$ is a derivative of $g$ we obtain
\begin{align*}
 & \left|f(\gamma(t_j)) - f(\gamma(t_{j-1}))-\<df(\gamma(t_j)),\gamma(t_j)-\gamma(t_{j-1})\>\right| \\
 & =  |g(\gamma(t_j))-g(z)| \le L(\gamma|_{[t_{j-1},t_j]}) \sup\{|df(\gamma(t))-df(\gamma(t_{j-1})|: t\in [t_{j-1},t_j]\}.
\end{align*}
The uniform continuity of $df\circ \gamma$ yields that this supremum is small whenever the partition is fine enough. The theorem then follows by writing
$f(\gamma(b))-f(\gamma(a))$ as a telescoping sum and inserting these estimates together with the obvious additivity of the length.
\end{proof}

Below, we will need a slightly more general version of the fundamental theorem: The formula $\int_\gamma df= f\circ\gamma|_a^b$ holds if $f$ and $df$
are continuous on $K$ and $df(x)$ is a derivative of $f$ at $x$ for all but finitely many $x\in \gamma([a,b])$.

Indeed, if only the endpoints $\gamma(a)$ and $\gamma(b)$ are exceptional, this follows from a simple limiting argument, the general case is then obtained by decomposing
the integral $\int_\gamma df$ into a sum.

Once in this article, we will have to find a rectifiably path by using the Arzel\'a-Ascoli theorem. 
It is then essential to have a ``tame'' parametrization which we explain briefly, more details can be found, e.g., in \cite{Haj03}.
Given a continuous $\gamma:[a,b]\to\R^d$ with length $L=L(\gamma)$ and length function $\ell(t)=L(\gamma|_{[a,t]})$ the function
$\alpha(s)=\inf\{t\in[a,b]: \ell(t)\ge s\}$ is again increasing but not necessarily continuous, it jumps over the
intervals where $\ell$ is constant. Nevertheless, $\tilde\gamma=\gamma\circ\alpha: [0,L]\to \R^d$ is a continuous path with $\tilde\gamma([0,L])=\gamma([a,b])$ 
such that all path integrals along $\gamma$ and $\tilde\gamma$ coincide and such that $L(\tilde\gamma|_{[0,t]})=t$ for all $t\in[0,L]$, in particular,
$\tilde\gamma$ is Lipschitz with constant $1$. This path $\tilde\gamma$ is called the parametrization of $\gamma$ by arclength.

If $\{\gamma_i:i\in I\}$ is a family of curves with equal length, it then follows that $\{\tilde\gamma_i:i\in I\}$ is equicontinuous.
Moreover, Rademacher's theorem implies that $\tilde\gamma$ is almost everywhere differentiable and absolutely continuous.

 \medskip
 
 We have seen that the behaviour of functions $f\in\C$ is essentially as in the case of open domains. We will now present Sauter's example \cite{Sau} showing 
that this not the case for $f\in C^1_{\inte}(K)$.

Let $C$ be the ternary Cantor set and $U$ its complement in $(0,1)$. The open set $\Omega$ is constructed from $U \times (0,1)$ by removing 
disjoints balls $(B_{j})_{j \in \N}$ that accumulate at $C \times [0,1]$ such that the sum of the diameters is $<1/4$.
This implies that there exist horizontal lines in $K=\overline{\Omega}$.

If $f$ is the Cantor function on $[0,1]$, we consider the function $F$ defined on $K$ by $F(x,y)=f(x)$. We have $F \in C^1_{\inte}(K)$ 
because it is continuous and  $dF=0$ on $\Omega=\mathring{K}$, as $f$ is locally constant on $U$. 
If now $\gamma$ is a path parametrizing one of the horizontal lines crossing $K$, we have
\begin{align*}
\int_{\gamma} dF = 0 \text{ while }  F(\gamma(1))-F(\gamma(0))=f(1)-f(0)=1.
\end{align*}
This proves $f\notin \C$. This example also reveals the catastrophy that compositions (namely $F \circ \gamma$) of $C^1_{\inte}$-functions 
need not be $C^1_{\inte}$.

\section{Completeness}

We study here the completeness of $(\C,\|\cdot \|_{\C})$ and $(\J,\|\cdot \|_{\J})$. We show that, if $K$ has infinitely 
many connected components, then these spaces are not complete. In contrast, if $K$  has finitely many connected components, 
the completness of both spaces is characterized by a pointwise geometric condition whose uniform version goes back to Whitney in \cite{Whi34c}. 
It is interesting to note that this characterization is conjectured in \cite{Dal10} in the context of complex differentiability. 

First we consider the case of compact sets with infinitely many connected components. This is similar to \cite[Theorem 2.3]{Bla05}.

\begin{Prop}\label{Prop:incic}
If $K$ is a compact set with infinitely many connected components, then $(\C,\|\cdot \|_{\C})$ is incomplete.
\end{Prop}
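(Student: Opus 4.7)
The strategy is to construct in $\C$ a Cauchy sequence of locally constant functions whose uniform limit is continuous but fails to be differentiable at some point. Since the characteristic function $\chi_V$ of a clopen set $V\subseteq K$ is locally constant, $\chi_V\in\C$ with continuous derivative $0$, so finite linear combinations of such indicators have $\C$-norm equal to their sup norm and Cauchyness is easy to arrange.

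The first task is to produce pairwise disjoint clopen sets $V_n \subseteq K$ and a point $x_0 \in K$ with $x_0 \notin \bigcup V_n$ and $\dist(V_n,x_0) \to 0$. Since $K$ has infinitely many components, one picks $x_n\in K$ in pairwise distinct components $C_n$ and extracts a convergent subsequence $x_n\to x_0$; dropping at most one term, one may assume $C_n$ differs from the component $C_0$ of $x_0$ for every $n\ge 1$. The sets $V_n$ are then built by induction: given $V_1,\ldots,V_{n-1}$, let $W=V_1\cup\cdots\cup V_{n-1}$, a clopen set avoiding $x_0$. If all $C_m$ for sufficiently large $m$ were contained in $W$, then by the pigeonhole principle some $V_k$ (with $k<n$) would contain infinitely many $x_m$; being closed, it would contain $x_0$, contradicting $x_0\notin W$. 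Hence there is $m$ with $|x_m-x_0|<1/n$ and $C_m\cap W=\emptyset$. Using the classical fact that in a compact Hausdorff space a component $C$ can be separated from any disjoint closed set by a clopen set, one obtains a clopen $V_n$ with $C_m\subseteq V_n$ and $V_n\cap(W\cup\{x_0\})=\emptyset$. Then $x_m\in V_n$ gives $\dist(V_n,x_0)<1/n$.

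Next, set $f_N=\sum_{n=1}^N \tfrac{1}{\sqrt{n}}\chi_{V_n}$. Each $f_N$ is locally constant on $K$, so $df_N=0$ is a continuous derivative and $\|f_N\|_\C=\|f_N\|_K$. The disjointness of the $V_n$ yields $\|f_M-f_N\|_\C\le 1/\sqrt{N+1}$ for $M>N$, so $(f_N)$ is Cauchy and converges uniformly to $f=\sum_{n=1}^\infty \tfrac{1}{\sqrt{n}}\chi_{V_n}$. This $f$ is continuous on $K$: at any $z\notin\bigcup V_n$, a sequence $z_k\to z$ with $z_k\in V_{n_k}$ must satisfy $n_k\to\infty$, as otherwise some closed $V_n$ would contain $z$; hence $f(z_k)=1/\sqrt{n_k}\to 0=f(z)$. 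However, $f$ is not differentiable at $x_0$: choosing $y_n\in V_n$ with $|y_n-x_0|=\dist(V_n,x_0)\le 1/n$, we have
\[
    \frac{|f(y_n)-f(x_0)|}{|y_n-x_0|}=\frac{1/\sqrt{n}}{|y_n-x_0|}\ge\sqrt{n}\to\infty,
\]
so no linear map can serve as a derivative at $x_0$, giving $f\notin\C$. Since $\|\cdot\|_\C\ge\|\cdot\|_K$, any candidate $\C$-limit of $(f_N)$ would coincide with $f$, which is impossible; therefore $\C$ is incomplete.

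The main obstacle is the geometric step of producing the sets $V_n$: components of $K$ may themselves be large, and the clopen sets produced by separation of two components are not automatically disjoint, so one cannot simply take small neighborhoods of $x_0$. The inductive construction circumvents this by asking only that each $V_n$ reach close to $x_0$ via some interior point, not that it be small in diameter, together with the pigeonhole observation that clopen sets avoiding $x_0$ cannot absorb a whole tail of the sequence $(x_m)$.
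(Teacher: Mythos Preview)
Your proof is correct and follows essentially the same approach as the paper: construct pairwise disjoint clopen pieces accumulating at a point $x_0$, build a Cauchy sequence of locally constant functions, and show that the uniform limit fails to be differentiable at $x_0$. The only cosmetic difference is your choice of coefficients $1/\sqrt{n}$ (yielding unbounded difference quotients, so $f\notin\C$ outright) versus the paper's choice $|x_j-x_0|$ (yielding difference quotients equal to $1$, so any putative derivative at $x_0$ would have norm $\ge 1$, blocking convergence since $df_n=0$); both variants work and your construction of the clopen sets is somewhat more carefully justified than the paper's.
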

\begin{proof}
We can partition $S_0=K$ into two non-empty, disjoint open subsets $S_1$ and $K_1$ such that $S_1$ has infinitely many connected components. 
Iterating this procedure we obtain a sequence $(K_j)_{j \in \N}$ of pairwise disjoints non-empty closed and open subsets of $K$. 

We fix $x_j \in K_j$ and, by compactness and passing to a subsequence, we can assume that $x_{j}$ convergeges in $ K$. The limit $x_0$ cannot belong to any 
$K_{j}$ because they are open and pairwise disjoint.

We consider the functions $f_n:K\to\R$ defined by $f_n(x)=|x_j-x_0|$ for $x\in K_j$ with $1\le j\le n$ and $f_n(x)=0$, else.
These functions are locally constant and hence $f_n \in \C$. It is easy to check that $(f_n)_{n \in \N}$ is a Cauchy sequence in $(\C,\|\cdot \|_{\C})$. 
The only possible limit is the function $f(x)=|x_j-x_0|$ for $x\in K_j$ and $j\in\N$ and $f(x)=0$, else.
But, for all $j \in \N$, we have
\begin{align*}
\frac{|f(x_{j})-f(x_0)|}{|x_{j}-x_0|}=1,
\end{align*}
and since $df_n=0$ this shows that $f$ cannot be the limit in $\C$.
\end{proof}

A set $K \subseteq \R^d$ is called \textit{Whitney regular} if there exists $C>0$ such that any two points $x,y \in K$ can be joined by a rectifiable path 
in $K$ of length bounded by $C |x-y|$. 

We say that $K$ is \textit{pointwise Whitney regular} if, for every $x \in K$, there are a neighbourhood $V_x$  of $x$  and $C_x>0$ such that any $y \in V_x$ is 
joined to $x$ by a rectifiable path in $K$ of length bounded by $C_x |x-y|$.

The inward cusp mentioned in the introduction distinguishes these two notions. If $K$ is {\it geodesically bounded} 
(i.e., any two points can be joined by a curve of length bounded by a fixed constant) one can take $V_x=K$ in the definition so that the crucial difference 
is then the non-uniformity of the constants $C_x$.

\begin{Prop}\label{prop:whba}
If $K$ is a pointwise Whitney regular compact set, then the space $(\J,\|\cdot \|_{\J})$ is complete.
\end{Prop}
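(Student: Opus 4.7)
The plan is to take a Cauchy sequence $(f_n, df_n)$ in $\J$, pass to the obvious candidate limit, and verify that the limit lies in $\J$ using the mean value inequality (Proposition preceding Theorem \ref{thm:pathint}) together with pointwise Whitney regularity.

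First I would observe that the norm $\|\cdot\|_\J$ controls both $\|f_n\|_K$ and $\|df_n\|_K$, so the two components are Cauchy sequences in the Banach spaces $C(K,\R)$ and $C(K,L(\R^d,\R))$ respectively. Thus there are continuous $f:K\to\R$ and $g:K\to L(\R^d,\R)$ with $f_n\to f$ and $df_n\to g$ uniformly on $K$. All that remains is to show $g$ is a continuous derivative of $f$, i.e.\ that for every $x\in K$,
\[
\lim_{\substack{y\to x\\ y\in K}} \frac{f(y)-f(x)-\<g(x),y-x\>}{|y-x|}=0.
\]

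Fix $x\in K$ and let $V_x$, $C_x$ be as in the pointwise Whitney regularity hypothesis. For $y\in V_x\cap K$, choose a rectifiable path $\gamma_{x,y}$ in $K$ from $x$ to $y$ with $L(\gamma_{x,y})\le C_x|y-x|$. The key trick is to apply the mean value inequality to the ``tilted'' function
\[
h_n(z):=f_n(z)-f_n(x)-\<df_n(x),z-x\>,
\]
whose continuous derivative is $dh_n(z)=df_n(z)-df_n(x)$. This yields
\[
|h_n(y)|=|h_n(y)-h_n(x)|\le C_x|y-x|\,\sup\bigl\{|df_n(z)-df_n(x)|:z\in\gamma_{x,y}([a,b])\bigr\}.
\]
Letting $n\to\infty$ and using uniform convergence $f_n\to f$, $df_n\to g$, this estimate passes to the limit to give
\[
\bigl|f(y)-f(x)-\<g(x),y-x\>\bigr|\le C_x|y-x|\,\sup\bigl\{|g(z)-g(x)|:z\in\gamma_{x,y}([a,b])\bigr\}.
\]

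Finally I would use that for every $z\in\gamma_{x,y}([a,b])$ one has $|z-x|\le L(\gamma_{x,y})\le C_x|y-x|$, so the image of $\gamma_{x,y}$ sits inside $\overline{B}(x,C_x|y-x|)\cap K$. By continuity of $g$ at $x$, the supremum on the right tends to $0$ as $y\to x$, so after dividing by $|y-x|$ the right-hand side vanishes, as required. The only real obstacle is recognizing the right auxiliary function $h_n$ so that the mean value inequality (which only produces Lipschitz bounds in the naive application) instead produces the $o(|y-x|)$ estimate needed for differentiability; once that is in place, pointwise Whitney regularity delivers the conclusion directly.
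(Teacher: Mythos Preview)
Your proof is correct and follows essentially the same route as the paper: obtain uniform limits, then use pointwise Whitney regularity together with a path-length estimate to verify the differentiability condition for the limit pair. The only cosmetic difference is that the paper invokes the fundamental theorem of calculus (Theorem~\ref{thm:pathint}) to write $f(y)-f(x)-\<df(x),y-x\>=\int_\gamma(df-df(x))$ directly for the limit, whereas you apply the mean value inequality to the tilted functions $h_n$ and then pass to the limit; both yield the same bound and finish identically via continuity of the limiting derivative.
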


\begin{proof}
For a Cauchy sequence $((f_j,df_j))_{j \in \N}$ in $\J$ we get from the completeness of $C(K)$ uniform limits $f$ and $df$ and we only have to show that $df$ 
is a derivative of $f$.

Given $x \in K$ and a path $\gamma$ from $x$ to $y$ of length $L(\gamma) \leq C_x |x-y|$, the formula in the fundamental theorem of calculus immediately 
extends from $f_j$ and $df_j$ to the limits and thus gives
\begin{align*}
f(y)-f(x)-\langle df(x),y-x \rangle = \int_\gamma (df-df(x)).
\end{align*}
The continuity of $df$ and the bound on $L(\gamma)$ then easily imply the desired differentiability.
\end{proof}

To obtain the converse of this simple result we first apply the uniform boundedness principle to show that the completeness of $(\C,\|\cdot \|_{\C})$ is equivalent 
to some bounds for the difference quotient of a function $f \in \C$. This is the same as in the case of complex differentiability \cite{Hon99,Bla05}.

\begin{Prop}
The following assertions are equivalent:
\begin{enumerate}[(a)]
\item The space $(\J,\|\cdot \|_{\J})$ is a Banach space.
\item The space $(\C,\|\cdot \|_{\C})$ is a Banach space.
\item For every $x \in K$, there exists $C_x>0$ such that for all $f \in \C$ and $y\in K\setminus \{x\}$
\begin{align} \label{prop:baest}
 \frac{|f(y)-f(x)|}{|y-x|} \leq C_x \| f \|_{\C}.
\end{align} 
\end{enumerate}
\end{Prop}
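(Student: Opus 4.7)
My plan is to establish the cyclic chain (a)$\Rightarrow$(b)$\Rightarrow$(c)$\Rightarrow$(a), with the last implication carrying all the content.

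For (a)$\Rightarrow$(b) I would simply note that $\pi:\J\to\C$ is a continuous linear surjection and that $\|\cdot\|_{\C}$ is, by definition, the associated quotient norm. The kernel $\ker\pi=\{(0,df)\in\J\}$ is closed in $\J$: if $df_n\to g$ uniformly and each $df_n(x)$ annihilates every direction of approach from $K$ at $x$, then so does $g(x)$, so $(0,g)\in\J$ still belongs to $\ker\pi$. A Banach-space quotient by a closed subspace is again a Banach space, giving (b). For (b)$\Rightarrow$(c) I would apply the uniform boundedness principle on $\C$ to the bounded linear functionals
\[
T_y(f)=\frac{f(y)-f(x)}{|y-x|},\qquad y\in K\setminus\{x\},
\]
for a fixed $x\in K$; continuity of each $T_y$ follows from $|T_y(f)|\le 2\|f\|_K/|y-x|\le 2\|f\|_{\C}/|y-x|$. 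Pointwise boundedness of $\{T_y(f):y\neq x\}$ for fixed $f$ splits into two regions: differentiability of $f$ at $x$ bounds the difference quotient by $|df(x)|+1$ in a neighbourhood of $x$, while the trivial bound $2\|f\|_K/\delta$ handles $y$ outside such a neighbourhood. The UBP then delivers a constant $C_x$ as in (c).

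The real work lies in (c)$\Rightarrow$(a). Given a Cauchy sequence $((f_n,df_n))$ in $\J$ with uniform limits $f$ and $g$, the only non-trivial point is that $g$ is a derivative of $f$ at each fixed $x\in K$. The idea is to subtract the candidate affine part by setting
\[
h_n(z)=f_n(z)-f_n(x)-\langle g(x),z-x\rangle,\qquad h(z)=f(z)-f(x)-\langle g(x),z-x\rangle,
\]
so that $h_n\in\C$ has continuous derivative $df_n-g(x)$, both $h_n$ and $h$ vanish at $x$, $h_n\to h$ uniformly, and $\|h_n-h_m\|_{\C}\to 0$. Feeding the differences $h_n-h_m$ into~(c) yields
\[
\frac{|h_n(y)-h_m(y)|}{|y-x|}\le C_x\|h_n-h_m\|_{\C}\longrightarrow 0
\]
uniformly in $y\in K\setminus\{x\}$, so $y\mapsto h_n(y)/|y-x|$ converges uniformly on $K\setminus\{x\}$ to $h(y)/|y-x|$. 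For each fixed $n$, decomposing $h_n(y)=(f_n(y)-f_n(x)-\langle df_n(x),y-x\rangle)+\langle df_n(x)-g(x),y-x\rangle$ and invoking the differentiability of $f_n$ at $x$ gives $\limsup_{y\to x}|h_n(y)|/|y-x|\le|df_n(x)-g(x)|$, which tends to $0$ as $n\to\infty$ by uniform convergence of $df_n$ to $g$. A standard double-limit argument then forces $\lim_{y\to x}h(y)/|y-x|=0$, which is precisely the differentiability of $f$ at $x$ with derivative $g(x)$.

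The main obstacle is this final implication: one must exchange $\lim_{n\to\infty}$ and $\lim_{y\to x}$ in the difference quotient, and condition (c), applied to $h_n-h_m$, supplies exactly the uniformity in $y$ that makes the exchange legitimate.
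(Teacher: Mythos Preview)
Your proof is correct and follows essentially the same route as the paper: the implications (a)$\Rightarrow$(b) and (b)$\Rightarrow$(c) are identical (quotient by a closed kernel, then Banach--Steinhaus on the difference-quotient functionals), and for (c)$\Rightarrow$(a) both arguments apply condition (c) to the differences $f_n-f_m$ to gain uniformity in $y$ of the difference quotients, then combine this with the differentiability of a fixed $f_p$ and the uniform convergence $df_n\to g$. Your packaging via the auxiliary functions $h_n$ and an explicit double-limit interchange is a clean reformulation of the paper's direct four-term $\varepsilon/4$ estimate, but the content is the same.
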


\begin{proof}
That (a) implies (b) is a standard fact from Banach space theory.
Let us show that the second assertion implies the third. For fixed $x \in K$ and each $y \in K\setminus\{x\}$ we define a linear and continuous functional on $\C$ by
\begin{align*}
\Phi_y(f)=\frac{f(y)-f(x)}{|y-x|}.
\end{align*}
For fixed $f \in \C$, we get a bound for $\sup_{y \in K \setminus \{x\}} |\Phi_y(f)|$ because of the differentiability at $x$.

The Banach-Steinhaus theorem thus gives
\begin{align*}
C_x=\sup \{|\Phi_y(f)|: \|f\|_{\C} \leq 1, y\in K\setminus\{x\} \} <\infty.
\end{align*}

Now we assume that inequality (\ref{prop:baest}) holds and show that $(\J,\|\cdot \|_{\J})$ is complete. For a Cauchy sequence $((f_j,df_j))_{j \in \N}$ 
in $\J$  we have uniform limits $f$ and $df$. In particular, for all $\varepsilon >0$, $x \in K$, and $p<q$ big enough, we have
\begin{align*}
\|f_p - f_q \|_{\C} \leq \|f_p - f_q \|_{\J} \leq \frac{\varepsilon}{4 C_x} \quad \text{and} \quad  \|df_p - df \|_K < \frac{\varepsilon}{4}.
\end{align*}
Now, there exists $\delta>0$ such that, for all $y \in B(x,\delta) \setminus \{x \}$,
\begin{align*}
B=\frac{|f_p(y)-f_p(x)-\langle df_p(x), y-x \rangle|}{|y-x|} < \frac{\varepsilon}{4}.
\end{align*}
Finally, for all such $y$, if $q$ is large enough,
\begin{align*}
A = \frac{|(f(y)-f_q(y))-(f(x)-f_q(x))|}{|y-x|} < \frac{\varepsilon}{4}
\end{align*}
and
\begin{align*}
&\frac{|f(y)-f(x)-\langle df(x),y-x \rangle |}{|x-y|} \\&\leq A +  \frac{|(f_p(y)-f_q(y))-(f_p(x)-f_q(x))|}{|y-x|} + B + |df_p(x)-df(x) | \\& < \varepsilon
\end{align*}
which shows that $df$ is a derivative of $f$ on $K$.
\end{proof}

Next we show that, for connected sets $K$, inequality (\ref{prop:baest}) implies pointwise regularity. This is 
a simple adaptation of a result in \cite[theorem 2.3.9]{Hor90}, we repeat the proof for the sake of completeness.

\begin{Prop} \label{Prop:estimpwhi}
Let $K$ be a compact connected set. If, for any $x \in K$, there exists $C_x>0$ such that for all $f \in \C$ and $y\in K\setminus \{x\}$ we have
\begin{align}\label{estwh1}
 \frac{|f(y)-f(x)|}{|y-x|} \leq C_x \| f \|_{\C},
\end{align}
then $K$ is pointwise Whitney regular.
\end{Prop}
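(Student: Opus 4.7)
The plan is to argue by contradiction. Suppose the estimate (\ref{estwh1}) holds at every $x\in K$ yet $K$ fails to be pointwise Whitney regular at some $x_0\in K$. Negating the definition, for each $n\in\N$ we obtain $y_n\in K$ with $|y_n-x_0|<1/n$ such that no rectifiable path in $K$ from $x_0$ to $y_n$ has length below $n|y_n-x_0|$. Writing
\begin{align*}
\delta(y)=\inf\{L(\gamma):\gamma\text{ a rectifiable path in }K\text{ from }x_0\text{ to }y\}
\end{align*}
with the convention $\delta(y)=+\infty$ when no such path exists, this reads $\delta(y_n)\ge n|y_n-x_0|$.

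My proposed witness is a truncated intrinsic distance, e.g.\ $f(y)=\min(\delta(y),1)$. Along any rectifiable path, the truncated distance is $1$-Lipschitz in arclength, so by combining the mean value inequality (Proposition~2.1) with the fundamental theorem of calculus (Theorem~\ref{thm:pathint}) any continuous derivative of $f$ must satisfy $\|df\|_K\le 1$, yielding $\|f\|_\C\le 2$. Since $f(x_0)=0$, while $f(y_n)=1$ for all $n$ large enough that $\delta(y_n)\ge 1$, the difference quotient satisfies
\begin{align*}
\frac{|f(y_n)-f(x_0)|}{|y_n-x_0|}=\frac{1}{|y_n-x_0|}\longrightarrow\infty,
\end{align*}
contradicting (\ref{estwh1}) at $x_0$.

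The main obstacle is to verify that this $f$ actually belongs to $\C$, that is, is extrinsically continuous on $K$ and admits a continuous derivative there. The function $\delta$ is only lower semicontinuous in general, and extrinsic convergence in $K$ need not imply intrinsic convergence (points may lie on distinct ``branches'' of $K$ that are close in $\R^d$ but intrinsically far), so a naive truncation can fail to be even continuous. Connectedness of $K$ is precisely the hypothesis I expect to carry the burden: on the rectifiably connected component of $x_0$ the function $\delta$ is continuous in the intrinsic metric, and one can hope to compare intrinsic with extrinsic locally by a compactness/path-splicing argument analogous to the one underlying Proposition~\ref{prop:whba}. It may be technically cleaner to replace the sharp truncation by a smooth $\chi\circ\delta$ with $\chi\in C^1(\R)$, $\chi(0)=0$, $\chi\equiv 1$ on $[1,\infty)$, or to work only within a suitably chosen extrinsic neighbourhood of $x_0$. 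Once this regularity issue is settled, the contradiction above closes the argument.
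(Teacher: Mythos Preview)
Your instinct to use the intrinsic distance to $x_0$ as a test function is exactly the right one, and it is the idea behind the paper's proof (which follows H\"ormander). But the obstacle you flag is genuine and your proposed remedies do not overcome it. The truncated intrinsic distance $f=\min(\delta,1)$ taken \emph{in} $K$ need not be continuous, and even where it is, there is no reason for it to admit a \emph{continuous} derivative on $K$: cut-locus phenomena already spoil this on smooth domains, and a rough compact set is worse. Composing with a smooth $\chi$ does nothing for either issue. Moreover, your appeal to the mean value inequality and Theorem~\ref{thm:pathint} runs in the wrong direction: those results bound increments of $f$ by $\|df\|_K$, not the reverse, so being $1$-Lipschitz in arclength does not furnish a continuous derivative, let alone one of norm at most $1$. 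As it stands the proposal does not produce a single element of $\C$ to which (\ref{estwh1}) can be applied.

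The paper closes exactly this gap by a thickening-and-mollification device. One replaces $K$ by its open $2\eps$-neighbourhood $K_{2\eps}$ (connected because $K$ is) and defines the intrinsic distance $d_\eps$ \emph{there}. In $K_{2\eps}$ nearby points are joined by straight segments, so the truncation $u_\eps=\min(d_\eps,d_\eps(y_0))$ is $1$-Lipschitz for the \emph{Euclidean} metric. Convolving with a bump supported in $B(0,\eps)$ yields a genuine $C^1(\R^d)$-function with $|d(u_\eps*\phi)|\le 1$ on $K$, to which (\ref{estwh1}) applies without any further regularity discussion. This gives $d_\eps(y_0)\le 2C_x|x-y_0|$ for $y_0$ near $x$, uniformly in $\eps$, and a final Arzel\`a--Ascoli argument (after reparametrization by arclength) extracts a limit path lying in $K$. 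Your throwaway suggestion to ``work within a suitably chosen extrinsic neighbourhood'' is in fact the crux of the matter; the thickening is what makes the intrinsic distance regular enough to serve as a test function, and the compactness step is what brings the conclusion back to $K$.
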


\begin{proof}
For any $\varepsilon>0$,
\begin{align*}
K_{\varepsilon} = \{x \in \R^d \, : \, \inf_{y \in K } |x-y| < \varepsilon \}
\end{align*}
is an open connected neighbourhood  of $K$. Let us fix $x \in K$ and define the function $d_\varepsilon$ on $K_{2\varepsilon}$ by
\begin{align*}
d_\varepsilon(y) = \inf \{L(\gamma): \gamma \text{ rectifiable path from $x$ to $y$ in $K_{2\varepsilon}$} \}.
\end{align*}
Then, for fixed $y_0 \in K$, we set $u_\varepsilon(y) = \min \{d_\varepsilon (y), d_\varepsilon (y_0) \}$. If $y$ and $y'$ are close enough in $K_{2\varepsilon}$, 
we have
\begin{align} \label{estwh2}
|u_\varepsilon (y) - u_\varepsilon (y')| \leq |y-y'|,
\end{align}
as any rectifiable path from $x$ to $y$ prolongs by the segment between $y$ and $y'$ to a rectifiable path from $x$ to $y'$. 

If $\phi$ is a positive smooth function with support in $B(0,\varepsilon)$ and integral $1$, the convolution $u_\varepsilon \ast \phi$, 
defined in $K_\varepsilon$, is a smooth function for which $|d(u_\varepsilon \ast \phi)| \leq 1$ on $K$, because of inequality (\ref{estwh2}). 
Then, from (\ref{estwh1}), we have
\begin{align*}
|(u_\varepsilon \ast \phi)(x)-(u_\varepsilon \ast \phi)(y_0)| \leq C_x (d_\varepsilon (y_0) + 1) |x-y_0|
\end{align*}
which gives us, passing to the limit $\supp (\phi) \to \{0 \}$,
\begin{align*}
d_\varepsilon (y_0) \leq  C_x (d_\varepsilon (y_0) + 1) |x-y_0|.
\end{align*}
For $y_0 \in B(x,\frac{1}{2 C_x})\cap K$, this implies $d_\varepsilon(y_0) \le 1$ and thus $d_\varepsilon(y_0)\le 2C_x|x-y_0|$. Hence, there exists a rectifiable path from $x$ to $y_0$ in $K_{2\varepsilon}$ 
of length bounded by $2 C_x  |x-y_0| + \varepsilon$. Using the parametrization by arc length gives an  equicontinuous family of paths
and the conclusion follows from the Arzel\'a -Ascoli theorem. 
\end{proof}

\begin{Rmk} \label{Rem:estimpwhi}
If the constant $C_x$ in previous proposition is uniform with respect to $x \in K$, then inequality (\ref{estwh2}) is equivalent to the Whitney regularity of $K$, 
as stated in H\"ormander's book.
\end{Rmk}

Gathering all the results of this section we have the following characterization of the completeness of $(\C,\| \cdot \|_{\C} )$.

\begin{Thm}\label{thm:compl}
$(\C,\| \cdot \|_{\C} )$ is complete if and only if $K$ has finitely many components which are pointwise Whitney regular.
\end{Thm}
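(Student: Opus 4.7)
The plan is to bundle Propositions~\ref{Prop:incic}, \ref{prop:whba}, the characterization via~(\ref{prop:baest}), and~\ref{Prop:estimpwhi} into the equivalence of Theorem~\ref{thm:compl}. The combinatorial feature driving the argument is that \emph{finitely many} closed disjoint sets covering $K$ are automatically clopen in $K$, hence pairwise separated by a positive distance.

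For necessity, I would assume $\C$ is complete. The contrapositive of Proposition~\ref{Prop:incic} forces $K$ to have only finitely many connected components $K_1,\dots,K_n$, each of which is then clopen in $K$. Fix $i$ and $g\in C^1(K_i)$ with a continuous derivative $dg$. Extending by $0$ off $K_i$ produces $\tilde g\in\C(K)$ with derivative $dg$ on $K_i$ and $0$ elsewhere: the limit in~(\ref{differ}) at any point of $K$ is computed entirely inside one component precisely because the $K_j$ are clopen. Moreover $\|\tilde g\|_{\C(K)}\le\|g\|_{C^1(K_i)}$. The implication (b)$\Rightarrow$(c) of the preceding proposition applied to the complete $\C(K)$ then furnishes constants $C_x$ for which~(\ref{prop:baest}) holds for every $\tilde g$; specialising $y$ to $K_i\setminus\{x\}$ transfers the same estimate to $g$ on $K_i$ with the same $C_x$. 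Proposition~\ref{Prop:estimpwhi} applied to the connected compact set $K_i$ (with the single-point case being trivial) then yields pointwise Whitney regularity.

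For sufficiency, I would assume $K=K_1\sqcup\cdots\sqcup K_n$ with each $K_i$ pointwise Whitney regular. Proposition~\ref{prop:whba} gives that each $\J(K_i)$ is a Banach space. The restriction map $\J(K)\to\prod_{i=1}^n\J(K_i)$ is a linear topological isomorphism: continuity and injectivity are immediate, and surjectivity comes from gluing, since jets on the clopen, pairwise separated pieces paste to a jet on $K$. Hence $\J(K)$ is complete as a finite product of Banach spaces, and the implication (a)$\Rightarrow$(b) in the preceding proposition transfers completeness to $\C(K)$.

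The only delicate step is the extension/gluing construction, which requires the components to be clopen; finiteness of $n$ is exactly what provides this. That this is the crux is confirmed by Proposition~\ref{Prop:incic}, where the analogous clopen decomposition is instead exploited in the infinite case to \emph{destroy} completeness, so the two directions mesh coherently.
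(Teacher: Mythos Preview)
Your proposal is correct and follows precisely the route the paper intends: the paper's ``proof'' consists only of the sentence ``Gathering all the results of this section we have the following characterization,'' and what you have written is exactly that gathering, made explicit via the clopen decomposition into finitely many components. The one small simplification available in the sufficiency direction is that a compact set with finitely many pointwise Whitney regular components is itself pointwise Whitney regular (shrink $V_x$ into the component containing $x$), so Proposition~\ref{prop:whba} applies directly to $K$ without the product argument; but your version via $\J(K)\cong\prod_i\J(K_i)$ is equally valid.
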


\begin{Rmk}
In this pointwise Whitney regular situation, the jet space $\J$ can be described as a space of continuous ``circulation free vector fields" $F$ on $K$, 
i.e., vector fields $F$ for which $\int_\gamma F=0$ for all closed rectifiable paths $\gamma$ in $K$. More precisely, if $(f,df) \in \J$, 
the fundamental theorem of calculus implies that $df$ is circulation free and if $F$ is circulation free and continuous we can define, for some fixed $x_0 \in K$, 
for all $x \in K$
\begin{align*}
f(x) = \int_\gamma F
\end{align*}
where $\gamma$ is a path in $K$ from $x_0$ to $x$. This definition makes sense as $F$ is circulation free and $F$ is a continuous derivative of $f$ on $K$, 
by a similar argument as in the proof of proposition \ref{prop:whba}.
\end{Rmk}

\section{Density of restrictions}

In this section we will show that the space $\Cr$ of restrictions of continuously differentiable functions on $\R^d$ to $K$ is always dense in $\C$. 
As $\Dd(\R^d)$, the space of $C^\infty$-functions with compact support, is dense in $C^1 (\R^d)$, this is the same as the density of test functions in $\C$ and again, 
it is advantageous to consider 
this question on the level of jets, that is, we will show that
\begin{align*}
i:  \Dd(\R^d) \to \J, \, \varphi \mapsto (\varphi|_{K} , d\varphi|_{K}) 
\end{align*}
has dense range. 

For general $K$, all standard approximation procedures like convolution with smooth bump functions do not apply easily, and we will use the Hahn-Banach theorem instead.

A continuous linear functional $\Phi$ on $\J \subseteq C(K)^{d+1}$ is, by the Hahn-Banach and Riesz's representation theorem, given by signed measures $\mu, \mu_1,\cdots, \mu_d$ on $K$ via
\begin{align*}
\Phi(f,df) = \int f  d\mu + \sum_{j=1}^{d} \int d_j f  d\mu_j,
\end{align*}
where $d_jf$ are the components of $df$. If $\Phi$ vanishes on the image of $i$ we have, for all $\varphi\in\Dd(\R^d)$,
\begin{align*}
\int \varphi d\mu + \sum_{j=1}^{d} \int \partial_j \varphi  d\mu_j =0.
\end{align*}
For the distributional derivatives of the measures this means that
\begin{align*}
\mu = \sum_{j=1}^{d} \partial_j \mu_j = \dive (T)
\end{align*} 
where $T = (\mu_1,\ldots, \mu_d)$ is a vector field of measures or a {\it charge}.

Fortunately, such charges were throughly investigated by Smirnov in \cite{Smi94}. Roughly speaking, he proved a kind of 
Choquet representation of charges in terms of very simple ones induced by Lipschitz paths in $K$. If $\gamma :  [a,b] \to K$ is 
Lipschitz with a.e.\ derivative $\dot\gamma=(\dot\gamma_1,\ldots,\dot\gamma_d)$ and $F=(F_1,\ldots,F_d)$ is a continuous vector field we have, as noted in section 2,
\begin{align*}
\int_\gamma F = \int_a^b \langle F(\gamma(t)), \dot\gamma(t)\rangle  dt = \sum_{j=1}^{d} \int_a^b F_j (\gamma(t)) \dot\gamma_j(t)  dt.
\end{align*}
In order to see this as the action $\< T,F\>=\sum\limits_{j=1}^d \int F_jd\mu_j$ of a charge $T=(\mu_1,\ldots,\mu_d)$ we denote by
$\mu_j$ is the image (or push-forward) under $\gamma$ of the measure with density $\dot\gamma_j$ on $[a,b]$ so that $\int F_j(\gamma(t))\dot\gamma_j(t)dt=\int F_jd\mu_j$.
 For the charge $T_\gamma=(\mu_1,\ldots,\mu_d)$ we then have
\[
\< T_\gamma,F\>= \int_\gamma F.
\]

The fundamental theorem of calculus for $\varphi \in \mathcal{D}(\R^d)$ with derivative $d\varphi$ then gives
\begin{align*}
\dive (T_\gamma) (\varphi) = - \int_\gamma d\varphi = \varphi(\gamma(a))-\varphi(\gamma(b)) = (\delta_{\gamma(a)}-\delta_{\gamma(b)})(\varphi), \text{ that is}
\end{align*}
\[
\dive(T_\gamma)= \delta_{b(\gamma)}-\delta_{e(\gamma)}
\]
where $b(\gamma)$ and $e(\gamma)$ denote the beginning and the end of $\gamma$
(the change of signs comes from the minus sign in the definition of distributional derivatives).

To formulate Smirnov's results we write $\Gamma$ for the set of all Lipschitz paths in $\R^d$. Moreover, for a charge $T$ we denote by
\begin{align*}
\|T\|(E) = \sup \left\{ \sum_{j \in \N} |T(E_j)|: (E_j)_{j\in\N}  \text{ is a partition of $E$} \right\}
\end{align*}
the corresponding variation measure.

Given a set $\mathcal{S}$ of charges, a charge $T$ is said to decompose into elements of $\mathcal{S}$ if there is a finite, positive measure on $\nu$ on $\mathcal{S}$ 
(endowed with the Borel $\sigma$-algebra with respect to the weak topology induced by the evaluation 
$\< (\mu_1,\ldots,\mu_d), (\varphi_1,\ldots,\varphi_d) \>= \sum_{j=1}^d \int \varphi_j \, d\mu_j, \, \varphi_j \in \mathcal{D}(\R^d)$) such that
\begin{align*}
T = \int_\mathcal{S} R \  d\nu(R)  \text{ and }  \|T\| = \int_\mathcal{S} \|R \|  d\nu(R) 
\end{align*}
where these integrals are meant in the weak sense, i.e., $\langle T, \varphi \rangle = \int_\mathcal{S} \langle R, \varphi \rangle \, d\nu(R)$ 
for all $\varphi \in (\mathcal{D}(\R^d))^d$. By density and the continuity of charges with respect to the uniform norm, this extends to all $\varphi \in (C_c (\R^d))^d$, where $C_c (\R^d)$ is 
the space of continuous functions with compact support.

We can now state a consequence of Smirnov's results (theorem C of \cite{Smi94} is somewhat more precise than we need).

\begin{Thm} \label{Thm:Smi1}
Every charge $T$ with compact support such that $\dive(T)$ is a signed measure can be decomposed into elements of $\Gamma$, i.e., there is a positive finite measure $\nu$ on $\Gamma$ such that
\[
T=\int_\Gamma T_\gamma d\nu(\gamma) 
\text{
 and } \|T\|=\int_\Gamma \|T_\gamma\|d\nu(\gamma).
\]
\end{Thm}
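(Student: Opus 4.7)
The plan is to deduce this statement from Smirnov's Theorem~C in \cite{Smi94}, after translating between the ``charge'' language of the excerpt and the ``normal $1$-current'' language of Smirnov's paper. First, I would view a charge $T=(\mu_1,\ldots,\mu_d)$ of finite variation as a $1$-dimensional current on $\R^d$ acting by $\<T,F\>=\sum_{j=1}^d\int F_j\,d\mu_j$ on smooth compactly supported vector fields $F$; its mass then equals $\|T\|(\R^d)<\infty$, and its distributional boundary coincides, up to the usual sign, with $\dive(T)$. The assumption that $\dive(T)$ be a signed measure is exactly the normality condition, and compactness of $\supp(T)$ places $T$ in the class of currents to which Smirnov's machinery applies.

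Next, I would invoke Smirnov's Theorem~C, which decomposes any such normal current as a weak integral, against a positive finite Borel measure $\nu$ on a space of elementary Lipschitz curves, of the elementary currents they carry, with a simultaneous decomposition of the mass. Combined with the weak-integral notion of ``decomposition into elements of $\mathcal{S}$'' recalled just before the statement, the two required identities $T=\int_\Gamma T_\gamma\,d\nu(\gamma)$ and $\|T\|=\int_\Gamma \|T_\gamma\|\,d\nu(\gamma)$ fall out at once, provided we can identify Smirnov's elementary currents with our $T_\gamma$.

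The real content of the proof is therefore this dictionary between the two formalisms. Using the arclength parametrization recalled in Section~2, every simple Lipschitz curve $\gamma$ gives rise to the vector-valued measure obtained by pushing forward the densities $\dot\gamma_1,\ldots,\dot\gamma_d$ on the parameter interval, and this is exactly our $T_\gamma$. The loops that appear in Smirnov's ``solenoidal'' part are elements of $\Gamma$ with $\dive(T_\gamma)=0$ and thus fit the same integral representation. One can moreover use the compact support of $T$ together with the mass identity to ensure that $\nu$ is concentrated on paths whose images lie in any prescribed neighbourhood of $\supp(T)$, although this localization is not needed for the statement as written.

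The main obstacle is precisely this translation step: Smirnov works in the intrinsic formalism of geometric measure theory, and one has to match his elementary solenoids and simple currents with the concrete path-induced charges $T_\gamma$ defined above, and identify his parametrization space with a subset of $\Gamma$. Once that identification is in place, the theorem becomes a direct corollary of Theorem~C in \cite{Smi94}.
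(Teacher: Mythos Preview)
Your proposal is correct and matches the paper's approach exactly: the paper does not give an independent proof of this theorem but simply states it as a consequence of Theorem~C in \cite{Smi94}, after setting up the dictionary between charges and Smirnov's normal $1$-currents in the preceding paragraphs. Your write-up merely spells out this translation in more detail than the paper does.
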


The decomposition of the corresponding variation measures has the important consequence that the supports of $\nu$-almost all $T_\gamma$ are contained in the support of $T$ (where the supports are meant as the supports of signed measures which coincide with the supports of the corresponding distributions). After removing a set of $\nu$-measure $0$ we can thus assume that all paths involved in the decomposition of $T$ have values in the support of $T$. Using the definition of the distributional derivative we also obtain a decomposition of the divergences:
\[
\dive(T)=\int_\Gamma \dive(T_\gamma) d\nu(\gamma)=
\int_\Gamma \delta_{b(\gamma)} -\delta_{e(\gamma)} d\nu(\gamma).
\]
We are now prepared to state and prove the main result of this section.

\begin{Thm}\label{the:dense}
For each compact set $K$, the space $C^1(\R^d|K)$ is dense in $\C$.
\end{Thm}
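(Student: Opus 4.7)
The plan is to reduce the assertion, via Hahn--Banach, to the triviality of the annihilator of $i(\Dd(\R^d))$ in the dual of $\J$. As sketched in the paragraphs preceding the theorem, any continuous linear functional on $\J \subseteq C(K)^{d+1}$ extends to $C(K)^{d+1}$ and is represented by signed Borel measures $\mu, \mu_1, \ldots, \mu_d$ on $K$; vanishing on $i(\Dd(\R^d))$ translates into the distributional identity $\mu = \dive(T)$ for the compactly supported charge $T=(\mu_1,\ldots,\mu_d)$. My goal is then to show that
\[
\Phi(f,df) = \int f\, d\mu + \sum_{j=1}^d \int d_jf\, d\mu_j = 0
\]
for every $(f,df)\in\J$.

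To do this, I will feed $T$ into Smirnov's decomposition theorem (Theorem~\ref{Thm:Smi1}), producing a finite positive measure $\nu$ on $\Gamma$ with $T=\int_\Gamma T_\gamma\, d\nu(\gamma)$ and $\|T\|=\int_\Gamma \|T_\gamma\|\, d\nu(\gamma)$. The variation identity forces $\supp(T_\gamma)\subseteq \supp(T) \subseteq K$ for $\nu$-almost every $\gamma$, so after discarding a null set I may assume that every Lipschitz path $\gamma$ appearing in the decomposition takes its values in $K$. The divergence then decomposes as $\dive(T)=\int_\Gamma (\delta_{b(\gamma)}-\delta_{e(\gamma)})\, d\nu(\gamma)$. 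Since each such $\gamma$ lies in $K$ and is Lipschitz, hence rectifiable, the fundamental theorem of calculus of Section~2 (Theorem~\ref{thm:pathint}) applies and yields $\int_\gamma df = f(e(\gamma)) - f(b(\gamma))$.

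Putting these ingredients together and applying Fubini, I expect to obtain
\[
\sum_{j=1}^d \int d_jf\, d\mu_j = \langle T, df\rangle = \int_\Gamma \int_\gamma df\, d\nu(\gamma) = \int_\Gamma \bigl(f(e(\gamma))-f(b(\gamma))\bigr)\, d\nu(\gamma),
\]
while on the other hand $\int f\, d\mu = \int f\, d\dive(T) = \int_\Gamma \bigl(f(b(\gamma))-f(e(\gamma))\bigr)\, d\nu(\gamma)$, and the two contributions telescope to zero. The main obstacle I anticipate is the measure-theoretic bookkeeping in this final step: Smirnov's decomposition is phrased for $\Dd(\R^d)$-test fields, so I will rely on the variation bound $\|T\|=\int \|T_\gamma\|\, d\nu$ in order to extend both the pairing $\langle T,\cdot\rangle$ and the decomposition of $\dive(T)$ to the merely continuous, compactly supported Tietze extensions of $df$ and $f$ from $K$ to $\R^d$, and to justify Fubini on $\int_\Gamma \int_\gamma df\, d\nu$. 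Once this extension and integrability are in place, the cancellation above gives $\Phi=0$ and the theorem follows.
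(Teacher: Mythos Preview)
Your proposal is correct and follows essentially the same route as the paper's own proof: Hahn--Banach reduction to $\Phi|_{\J}=0$, representation by a charge $T$ with $\dive(T)=\mu$, Smirnov's decomposition with the variation identity forcing the paths into $K$, Tietze extension of $f$ and $df$ to $C_c(\R^d)$, and then the fundamental theorem of calculus along each $\gamma$ to produce the telescoping cancellation. The only cosmetic difference is that the paper absorbs the extension step (from $\Dd$ to $C_c$ test fields) into the remark preceding Theorem~\ref{Thm:Smi1} rather than flagging it inside the proof, but the content is identical.
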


\begin{proof}
We will show that $i:\mathscr D(\R^d)\to \J$, $\varphi\mapsto (\varphi|_K,d\varphi|_K)$ has dense range, the conclusion then follows by projecting onto the first components.

Let us consider $\Phi \in (C(K)^{d+1})'$ such that $\Phi$ vanishes on the range of 
$i$, by the Hahn-Banach theorem  it is enough to show that $\Phi|_{\J}=0$. 

As explained at the beginning of this section we get signed measures $\mu$ and $\mu_j$  on $K$ with
\begin{align*}
\Phi ((f,f_1,\cdots,f_d))=\int f  d\mu + \int f_1 d\mu_1 + \cdots + \int f_d  d\mu_d
\end{align*}
for all $ (f,f_1,\cdots,f_d) \in C(K)^{d+1}$,
and $T=(\mu_1 , \cdots , \mu_d)$ satisfies $\dive (T) =\mu$. We can thus apply theorem \ref{Thm:Smi1} and get a measure $\nu$ and $\mathcal{S} \subseteq \Gamma$ such that all paths in $\mathcal{S}$ have values in $K$ and
\begin{align*}
T = \int_{\mathcal{S}} T_\gamma  d\nu(\gamma).
\end{align*}
For $ (f,df)=(f,d_1 f,\ldots,d_d f) \in \J$ we extend all components to $C_c(\R^d)$ by Tietze's theorem and 
obtain from the fundamental theorem of calculus for $\C$-functions
\begin{align*}
& \int d_1 f d\mu_1 + \cdots + \int d_df  d\mu_d = \<T,df\>= \int_\mathcal{S} \langle T_\gamma,df\rangle  d\nu(\gamma) 
    \\
& =  \int_\mathcal{S} \delta_{e(\gamma)}(f)-\delta_{b(\gamma)}(f)  d\nu(\gamma) 
= - \dive (T) (f)
=-\int f  d\mu,
\end{align*}
which means that $\Phi|_{\J}=0$.
\end{proof}

The use of the Hahn-Banach theorem has the disadvantage of not giving any concrete approximations. Let us therefore very briefly mention two 
situations where they can be described explicitly.

A natural idea is to glue the local approximation given by the definition of differentiability together with a partition of unity. 
We decompose $\R^d$ into $d$-dimensional squares $Q_j$, choose points $x_j \in K \cap Q_j$ and a partition of unity $(\varphi_j)_j$ subordinated 
to slightly bigger squares with a fixed number of overlaps and bounds on the derivatives $|\partial_k \varphi_j| \leq C \vol(Q_j)^{-1}$ as,
e.g., in \cite[Thm.~1.4.6]{Hor90}. Then one expects
\begin{align*}
h(x)= \sum_j \varphi_{j}(x) \left(f(x_j)+\langle df(x_j),x-x_j \rangle\right)
\end{align*}
to be an approximation in $\C$ of a given $f$.

However, to estimate $\|df-dh\|_K$ by using theorem \ref{thm:pathint} requires enough curves in $K$ with uniform bounds on the length, i.e., that $K$ is Whitney regular.

\medskip

An even simpler approximation works  for compact sets which are (locally) starlike or, in the terminology of Feinstein, Lande and O'Farrell \cite{Fei96}
``locally radially self-absorbing". In the simplest case, we have $K \subseteq r \overset{\circ}{K}$ for some $r>1$. 
Given then $f \in \C$ one gets an approximation $h(x)=f(\frac{1}{r}x)$ on $r \overset{\circ}{K}$ for $r$ close to $1$ which one can 
multiply with a cut-off function which is $1$ near $K$ to get an approximation by functions in $C^1(\R^d)$. This ``blow up trick" can be 
localized with the aid of partition of unity.

\section{Comparison}

In this section, we compare the spaces $\Cr$, $\C$ and $C^1_{\inte}(K)$.

\begin{Thm} \label{Thm:E=C}
$\C=\Cr$ with equivalent norms if and only if $K$ has only finitely many components which are all Whitney regular.
\end{Thm}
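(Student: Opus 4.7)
The strategy is to work at the level of jets and combine three tools from the preceding sections: the density of $\Cr$ in $\C$ (Theorem \ref{the:dense}), the completeness characterization (Theorem \ref{thm:compl}), and the uniform version of Proposition \ref{Prop:estimpwhi} noted in Remark \ref{Rem:estimpwhi}. A useful preliminary observation is that the Lipschitz-type seminorm appearing in $\|\cdot\|_\E$ does not involve the derivative, so the quotient norm splits as
\[
\|f\|_\Cr = \|f\|_\C + \sup_{\substack{x,y\in K\\ y\neq x}} \frac{|f(y)-f(x)|}{|y-x|}.
\]
Consequently, $\|f\|_\Cr\le C\|f\|_\C$ on $\C$ is equivalent to a uniform bound of this Lipschitz seminorm by $\|\cdot\|_\C$.

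For the sufficient direction, assume $K=K_1\cup\cdots\cup K_n$ with each $K_i$ Whitney regular of constant $C_i$. Fix $(f,df)\in\J$. When $x,y$ lie in the same component $K_i$, the mean value inequality applied along a path of length at most $C_i|y-x|$ gives $|f(y)-f(x)|\le C_i\|df\|_K\,|y-x|$. When $x,y$ lie in distinct components, the positive separation $\delta=\min_{i\neq j}\dist(K_i,K_j)$ yields $|f(y)-f(x)|/|y-x|\le 2\|f\|_K/\delta$. Therefore the Lipschitz seminorm is bounded by a constant multiple of $\|(f,df)\|_\J$, so $\J$ and $\E$ coincide with equivalent norms; passing to the quotient delivers $\C=\Cr$ with equivalent norms.

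For the necessary direction, $\Cr$ is always a Banach space, so the assumed norm equivalence forces $\C$ to be complete. Theorem \ref{thm:compl} then yields finitely many components, each pointwise Whitney regular; the task is to promote the constants $C_x$ of (\ref{prop:baest}) into a single constant on each component. Fix a component $K_i$ and $f\in C^1(K_i)$ with derivative $df$. Because the (finitely many) components of $K$ are clopen, the piecewise function $\tilde f$ equal to $f$ on $K_i$ and $0$ on $K\setminus K_i$ belongs to $\C$ with a derivative defined by the same recipe, giving $\|\tilde f\|_\C\le\|f\|_{K_i}+\|df\|_{K_i}$. The norm equivalence and the splitting above yield, for a constant $C$ independent of $f$,
\[
\sup_{\substack{x,y\in K_i\\ y\neq x}} \frac{|f(y)-f(x)|}{|y-x|} \le \|\tilde f\|_\Cr \le C\bigl(\|f\|_{K_i}+\|df\|_{K_i}\bigr),
\]
which is the uniform version of (\ref{estwh1}) on the connected set $K_i$. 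Remark \ref{Rem:estimpwhi} then delivers Whitney regularity of $K_i$.

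The main obstacle I anticipate is mostly bookkeeping: isolating the Lipschitz seminorm cleanly from the infimum that defines $\|\cdot\|_\Cr$, and verifying that the zero-extension trick produces a genuine $\C$-function with a controlled derivative. Both become routine once one uses the clopenness of the components, but they are essential for extracting the uniform constant on each component out of the global norm equivalence.
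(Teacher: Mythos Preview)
Your argument is essentially correct, but two points deserve comment.

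First, the ``splitting'' $\|f\|_{\Cr}=\|f\|_{\C}+\sup_{x\neq y}\frac{|f(y)-f(x)|}{|y-x|}$ is not an equality in general: since $\E\subseteq\J$, the infimum of $\|df\|_K$ over Whitney jets can exceed the infimum over all $\J$-jets, so one only has $\|f\|_{\Cr}\ge\|f\|_{\C}+\text{Lip}(f)$. Fortunately this inequality is all you actually use (to extract the uniform Lipschitz bound from the norm equivalence in the necessary direction), so no harm is done.

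Second, and more substantively, in the sufficient direction the Lipschitz bound alone does \emph{not} yield ``$\J$ and $\E$ coincide'': membership in $\E$ requires the limit in (\ref{differ}) to be \emph{uniform} in $x$, which is a stronger statement than a global Lipschitz estimate. The gap is easily filled by the same tool: for $x,y$ in the same component apply the mean value inequality to $g(z)=f(z)-f(x)-\langle df(x),z-x\rangle$ along a path of length $\le C_i|y-x|$, obtaining
\[
|f(y)-f(x)-\langle df(x),y-x\rangle|\le C_i|y-x|\,\omega_{df}(C_i|y-x|),
\]
where $\omega_{df}$ is the modulus of continuity of $df$; points in distinct components stay at distance $\ge\delta>0$ and are irrelevant for the limit. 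This gives uniformity and hence $\J=\E$.

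With this filled in, your sufficient direction is actually \emph{more elementary} than the paper's. The paper argues indirectly: it shows the two norms are equivalent on $\Cr$, so $\Cr$ is complete for $\|\cdot\|_{\C}$ and hence closed in $\C$; then it invokes the density Theorem~\ref{the:dense} (which rests on Smirnov's deep decomposition of charges) to conclude $\Cr=\C$. You bypass the density theorem entirely by showing $\J=\E$ directly via the mean value inequality. Your necessary direction coincides with the paper's, your zero-extension trick making explicit the passage to individual components that the paper leaves implicit when invoking Remark~\ref{Rem:estimpwhi}.
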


\begin{proof}
Assuming the stated isomorphism of normed spaces we get that $\C$ is complete and proposition \ref{Prop:incic} implies that $K$ has only finitely many components. Moreover,
the equivalence of norms implies $\frac{|f(y)-f(x)|}{|y-x|} \leq C  \|df\|_{\C}$ for some constant so that remark \ref{Rem:estimpwhi} implies that each component is
Whitney regular.

For the other implication we first note that the global Whitney condition for each of the finitely many components implies, by the mean value inequality, the equivalence of 
the norms $\|\cdot\|_{\Cr}$ and $\|\cdot\|_{\C}$ on $\Cr$. This is thus a complete and hence closed subspace of $\C$ and, on the other hand, it is dense by theorem \ref{the:dense}.
\end{proof}

If we assume a priori the completeness of $\C$, i.e., $K$ has finitely many components which are pointwise Whitney regular, 
then the algebraic equality $\C=\Cr$ already implies the equivalence of norms by the open mapping theorem.
However, in the next chapter we will see that $K=\{0\}\cup \{2^{-n}: n\in\N\}$ satisfies $\C=C^1(\R|K)$ although $\C$ is incomplete.
This means that the algebraic equality, in general, does not imply the equivalence of norms. Except for the one-dimensional case, we do not know a characterization of $\C=\Cr$.
Nevertheless, we would like to remark that this property has very poor stability properties. The example of the inward directed cusp mentioned in the introduction is the union of two even convex 
sets whose intersection is an interval (sadly, the two halfs of a broken heart behave better than the intact heart). More surprising is perhaps the following example showing 
that the property $\C=\Cr$ is not stable with respect to cartesian products.

\begin{Ex} 
For $M=\{0\}\cup \{2^{-n}: n\in\N\}$ and $K=M\times [0,1]$ we have $\C\neq\Cr[2]$.
\end{Ex}

\begin{proof}
  We construct a function $f\in \C$ which is equal to $0$ everywhere except for some tiny bumps on the segments $S_n=\{2^{-n}\}\times [0,1]$. More precisely, we fix 
  $\varphi\in C^\infty(\R)$ with support in $[-1,1]$ which is bounded in absolute value by $1$, and satisfies $\varphi(0)=1$. For $(x,y)\in S_n$ we then set 
  $f(x,y)= n^{-3} \varphi( n^2(y-1/n))$. It is easy to check that $f$ is differentiable on $K$ (the only non-obvious point is $(0,0)$ where the derivative is $0$),
  and that one can choose a continuous derivative (because the second partial derivatives on $S_n$ are bounded by $c/n$ 
  where $c$ is a bound for the derivative of $\varphi$). Hence $f\in \C$ but $f\notin \Cr[2]$ because $f$ is not Lipschitz continuous as
  $f(2^{-n},1/n) - f(2^{-n+1},1/n))= n^{-3}$ which is much bigger than the distance between the arguments.
\end{proof}

Let us consider now a topologically regular compact set $K \subseteq \R^d$. We can formulate the main theorem of \cite{Whi34c} in this context as follows.

\begin{Thm}\label{whitney-thm}
Let $K$ be a topologically regular compact set. If $\mathring{K}$ is Whitney regular, then $C^1_{\inte}(K)=\Cr$.
\end{Thm}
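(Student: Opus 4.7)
The plan is to show that every $f \in C^1_{\inte}(K)$ gives rise to a Whitney jet $(f,df) \in \E$; the Whitney extension theorem then yields $f \in \Cr$, and the reverse inclusion $\Cr \subseteq C^1_{\inte}(K)$ is obvious because any $C^1$-function on $\R^d$ restricts to something that is $C^1$ on $\mathring{K}$ with a continuous extension of the derivative to $K$.

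So fix $f \in C^1_{\inte}(K)$, still denoting by $f$ and $df$ the continuous extensions to $K$. The heart of the matter is to establish the uniform estimate
\begin{equation*}
|f(y)-f(x)-\langle df(x),y-x\rangle| \le \eps |y-x|
\end{equation*}
for $x,y\in K$ with $|y-x|$ sufficiently small, where $\eps > 0$ is arbitrary. To get this, I would use the Whitney constant $C$ for $\mathring{K}$ as follows. Given $\eps>0$, uniform continuity of $df$ on $K$ gives a $\delta>0$ such that $|df(z)-df(w)|<\eps$ whenever $z,w\in K$ satisfy $|z-w|<\delta$. Take $x,y\in K$ with $|y-x|$ much smaller than $\delta/C$ and pick sequences $x_n,y_n\in\mathring{K}$ with $x_n\to x$, $y_n\to y$. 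For large $n$ we have $|x_n-y_n|<\delta/C$, so there is a rectifiable path $\gamma_n$ from $x_n$ to $y_n$ inside $\mathring{K}$ with $L(\gamma_n)\le C|x_n-y_n|$; in particular $\gamma_n$ stays in a $\delta$-neighbourhood of $x$ in $K$, so $|df(z)-df(x_n)|<2\eps$ for every $z$ on $\gamma_n$. Since $f\in C^1(\mathring{K})$ in the classical sense, the fundamental theorem of calculus (Theorem \ref{thm:pathint}) applies along $\gamma_n$ and, combined with the mean value inequality for the auxiliary function $w\mapsto f(w)-f(x_n)-\langle df(x_n),w-x_n\rangle$, yields
\begin{equation*}
|f(y_n)-f(x_n)-\langle df(x_n),y_n-x_n\rangle| \le 2C\eps\,|x_n-y_n|.
\end{equation*}
Letting $n\to\infty$ and using the continuity of $f$ and $df$ on $K$ transfers this estimate to $x,y$ themselves.

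This uniform estimate is exactly the Whitney jet condition, so $(f,df)\in\E$ and Whitney's theorem delivers $f\in\Cr$. As an alternative packaging, I could instead argue that $K=\overline{\mathring{K}}$ is itself Whitney regular (approximating pairs in $K$ by pairs in $\mathring{K}$ and extracting a limit curve via the arclength parametrization and Arzel\`a--Ascoli, as described at the end of Section~2), and then invoke Theorem \ref{Thm:E=C} after showing $C^1_{\inte}(K)\subseteq\C$ by the same interior-path argument applied pointwise.

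The main obstacle is the boundary case: differentiability of $f$, in the sense of \eqref{differ}, at points $x\in\partial K$ is not given to us, only the continuous extension of $df$ is. The argument above circumvents this by never trying to differentiate at the boundary directly; instead we apply the fundamental theorem of calculus strictly inside $\mathring{K}$ (where differentiability is classical) along Whitney-regular paths, and then pass to the boundary by continuity of $f$ and $df$. The critical technical point is that Whitney regularity of $\mathring{K}$ ensures the constant $C$ does not depend on $x_n, y_n$, which is what ultimately makes the jet estimate uniform and hence produces a genuine Whitney jet.
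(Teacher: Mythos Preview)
The paper does not actually prove this theorem; it is stated as a reformulation of the main result of \cite{Whi34c} and left without argument. Your proposal therefore goes beyond the paper, and the argument is correct: approximating $x,y\in K=\overline{\mathring K}$ by interior points $x_n,y_n$, joining these by short rectifiable paths in $\mathring K$ (this is where Whitney regularity of the interior is used), applying the mean value inequality there to $w\mapsto f(w)-\langle df(x_n),w\rangle$, and passing to the limit via the continuity of $f$ and $df$ on $K$ yields the estimate $|f(y)-f(x)-\langle df(x),y-x\rangle|\le 2C\eps\,|y-x|$ uniformly in $x$, i.e.\ $(f,df)\in\E$. One cosmetic point: ``much smaller than $\delta/C$'' should be made precise (e.g.\ $|x-y|<\delta/(2C)$ and $n$ large enough that $|x_n-x|,|y_n-y|<\delta/(4C)$), but this is routine.

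Your alternative packaging is also valid and arguably closer to the paper's internal logic: the same approximation together with arclength parametrization and Arzel\`a--Ascoli shows that $K$ itself is Whitney regular (hence connected, so Theorem~\ref{Thm:E=C} gives $\C=\Cr$), and the interior-path estimate applied pointwise yields $C^1_{\inte}(K)\subseteq\C$. Note, however, that one cannot simply invoke Proposition~\ref{Prop:C=Cint} here, since the limit path produced by Arzel\`a--Ascoli need not stay in $\mathring K\cup\{x,y\}$; the pointwise inclusion $C^1_{\inte}(K)\subseteq\C$ really does require the approximation argument you describe.
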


Here, we prove that the reverse implication doesn't hold. In \cite{Zob99}, Zobin considers a similar question for Sobolev regularity
where, despite the similarity, the situation is different. For this purpose, we establish the following proposition.

\begin{Prop}\label{Prop:C=Cint}
Let $K$ be a topologically regular compact set and assume that, for all $x \in \partial K$, there exist $C_x >0$ and a neighbourhood $V_x$ of $x$ such 
that each $y \in V_x$ can be joined from $x$ by a rectifiable path in $\mathring{K}\cup\{x,y\}$ of length bounded by $C_x |x-y|$. Then $C^1_{\inte}(K)=\C$.
\end{Prop}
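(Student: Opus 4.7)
The plan is to verify $\C=C^1_{\inte}(K)$ by establishing both inclusions, where only the inclusion $C^1_{\inte}(K) \subseteq \C$ requires the hypothesis.

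First I would dispose of $\C \subseteq C^1_{\inte}(K)$, which is essentially tautological: if $f \in \C$ with continuous derivative $df$ on $K$, then at any interior point $x \in \mathring{K}$ the limit condition (\ref{differ}) over $y \in K$ coincides with Fréchet differentiability on the open set $\mathring{K}$ (since a whole neighbourhood of $x$ lies in $K$), so $f \in C^1(\mathring{K})$, and $df$ restricted to $\mathring K$ already extends continuously to $K$ by assumption.

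For the converse, take $f \in C^1_{\inte}(K)$ and let $df$ denote the continuous extension to $K$ of the Fréchet derivative on $\mathring{K}$. I need to check that $df$ satisfies (\ref{differ}) at every $x \in K$. Again for $x \in \mathring{K}$ this is immediate from usual $C^1$-calculus on the open set $\mathring K$. The real work is for $x \in \partial K$, and this is where the hypothesis enters.

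Fix $x \in \partial K$ and let $V_x, C_x$ be as in the hypothesis. For $y \in (V_x \cap K) \setminus \{x\}$, pick a rectifiable path $\gamma_y : [a,b] \to \mathring{K} \cup \{x,y\}$ from $x$ to $y$ with $L(\gamma_y) \le C_x|y-x|$. Since $f$ and $df$ are continuous on $K$ and $df$ is a derivative of $f$ at every point of $\gamma_y([a,b])$ except possibly the two endpoints, the generalized fundamental theorem of calculus mentioned after Theorem~\ref{thm:pathint} applies, and for the constant vector field $df(x)$ the integral is computed explicitly, so that
\[
f(y)-f(x)-\<df(x),y-x\> = \int_{\gamma_y} \bigl(df - df(x)\bigr).
\]
The mean value inequality (or a direct Riemann--Stieltjes estimate) then bounds the right hand side by $L(\gamma_y) \cdot \sup\{|df(z)-df(x)| : z \in \gamma_y([a,b])\} \le C_x |y-x| \cdot \sup\{|df(z)-df(x)| : z \in \gamma_y([a,b])\}$.

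Dividing by $|y-x|$ yields
\[
\frac{|f(y)-f(x)-\<df(x),y-x\>|}{|y-x|} \le C_x \sup\{|df(z)-df(x)| : z \in \gamma_y([a,b])\}.
\]
Since every point of $\gamma_y$ lies within distance $L(\gamma_y)\le C_x|y-x|$ of $x$, the image $\gamma_y([a,b])$ shrinks to $\{x\}$ as $y \to x$, and the continuity of $df$ at $x$ forces the supremum to tend to $0$. This proves (\ref{differ}) at $x$, hence $f \in \C$. The main (small) obstacle is simply to remember that the generalized version of Theorem~\ref{thm:pathint} tolerates finitely many exceptional points on the path, which is exactly what is needed since only the endpoints $x$ and $y$ need not lie in $\mathring{K}$.
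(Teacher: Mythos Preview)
Your proof is correct and follows essentially the same route as the paper's: both reduce to boundary points, invoke the remark after Theorem~\ref{thm:pathint} (the fundamental theorem with finitely many exceptional points) along the hypothesized path to obtain $f(y)-f(x)-\langle df(x),y-x\rangle=\int_{\gamma_y}(df-df(x))$, and then conclude via the length bound and continuity of $df$, exactly as in Proposition~\ref{prop:whba}. You simply spell out the easy inclusion $\C\subseteq C^1_{\inte}(K)$ and the final estimate more explicitly than the paper does.
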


\begin{proof}
Let us take $f \in C^1_{\inte}(K)$, to prove that $f \in \C$ we just have to show the differentiability at $x \in \partial K$. 
For all $y \in V_x$ we get from the remark after the fundamental theorem \ref{thm:pathint} 
\begin{align*}
f(y)-f(x)-\langle df(x),y-x \rangle = \int_\gamma (df-df(x)),
\end{align*} 
where $\gamma$ is as stated in the assumptions. This is enough to get the differentiability at $x$, as we 
did previously in proposition \ref{prop:whba}.
\end{proof}

We now construct a topologically regular compact connected set whose interior is not Whitney regular but where equality $C^1_{\inte}(K)=\Cr$ holds. 

\begin{Ex}
Let $\Omega$ be the open unit disk in $\R^2$ from which we remove, as in the Sauter's example, tiny disjoints balls which accumulate at $S=\{0\} \times [-\frac{1}{2},\frac{1}{2}]$. 
Then $K=\overline{\Omega}$ is connected, topologically regular and Whitney regular (by the same argument as explained below). In particular, from theorem \ref{Thm:E=C}, we know that $C^1(\R^2 | K)= \C$.

Of course, $\mathring{K}$ is not Whitney regular, because $S$ is not contained in $\mathring{K}$, but 
the assumptions of proposition \ref{Prop:C=Cint} are satisfied and hence $\C=C^1_{\inte}(K)$: Indeed, a boundary point $x$ of $K$ is either a 
boundary point of the unit disc or of one of the tiny removed discs in which cases the condition is clear, or $x$ is on the segment $S$. If then $y$ is any other point of $K$
we connect it by a short path to a point $\tilde y\in \mathring K$, consider the line from $\tilde y$ to $x$ and, whenever this line intersects one of the removed discs, we replace this intersection by a path through $\mathring K$ which is parallel to 
the boundary of the little disc. The total length increase of this new path is by a factor $\pi+\varepsilon$.
\end{Ex}

To give a partial converse of Whitney's theorem \ref{whitney-thm} we state the following consequence of \ref{thm:compl}.
\begin{Prop}
Let $K$ be a topologically regular compact set. If $C^1_{\inte}(K)=\C$ (in particular, if
$C^1_{\inte}(K)=\Cr$ holds), then $K$ has only finitely many connected components which are all pointwise Whitney regular.
\end{Prop}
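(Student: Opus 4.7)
The plan is to reduce the proposition to Theorem \ref{thm:compl} by showing that the hypothesis $C^1_{\inte}(K)=\C$ forces $(\C,\|\cdot\|_{\C})$ to be a Banach space; the geometric conclusion is then exactly what Theorem \ref{thm:compl} delivers.

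First I would recall that since $K$ is topologically regular, $\mathring K$ is dense in $K$, so any continuous derivative $df$ of $f\in\C$ on $K$ restricts to the classical derivative of $f$ on the open set $\mathring K$ (which is uniquely determined there) and, being continuous on $K$, must be the unique continuous extension of that derivative. This is the injectivity of $\pi\colon\J\to\C$ noted in the introduction of the paper. As a consequence, the infimum in the definition of $\|\cdot\|_{\C}$ is attained by the unique $df$, so
\[
\|f\|_{\C}=\|f\|_K+\|df\|_K,
\]
which is precisely the Banach-space norm on $C^1_{\inte}(K)$.

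Next I would combine this with the hypothesis: the equality $C^1_{\inte}(K)=\C$ is then an equality of normed spaces, not merely of sets. Since $C^1_{\inte}(K)$ endowed with $\|f\|_K+\|df\|_K$ is always complete (as mentioned in the introduction), this forces $(\C,\|\cdot\|_{\C})$ to be complete as well. Theorem \ref{thm:compl} then yields immediately that $K$ has only finitely many connected components and that each of them is pointwise Whitney regular.

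The parenthetical addendum handles the case $C^1_{\inte}(K)=\Cr$: we always have $\Cr\subseteq\C$, and, as observed above, $\C\subseteq C^1_{\inte}(K)$ whenever $K$ is topologically regular; hence $C^1_{\inte}(K)=\Cr$ sandwiches $\C$ into the same space and reduces to the previous case. I do not anticipate any real obstacle; the only point that requires care is the identification of the two norms, which hinges on the uniqueness of the continuous derivative in the topologically regular setting.
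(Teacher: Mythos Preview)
Your proposal is correct and follows exactly the paper's approach: show that the hypothesis makes $(\C,\|\cdot\|_{\C})$ complete and then invoke Theorem~\ref{thm:compl}. The paper's proof is a single sentence that leaves implicit the reason completeness follows; you have simply made explicit the identification of norms via uniqueness of the derivative on topologically regular $K$, and you have also spelled out the sandwich $\Cr\subseteq\C\subseteq C^1_{\inte}(K)$ for the parenthetical remark, which the paper does not bother to justify.
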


\begin{proof}
 If  $C^1_{\inte}(K)=\C$, then $(\C,\|\cdot \|_{\C})$ is complete and hence theorem \ref{thm:compl} implies the stated properties of $K$.
\end{proof}

\section{The one-dimensional case}

In this last section we completely characterize the equality between the three spaces of $C^1$-functions for compact subsets of $\R$.  
Of course, all three spaces coincide for compact sets with only finitely many components, and otherwise $\C$ is incomplete by proposition \ref{Prop:incic}
and thus different from $C^1_{\inte}(K)$. The remaining question when $\C=\Cro$ will depend on the behaviour of the bounded connected components 
of $\R\setminus K$ which we call {\it gaps of $K$}. These are thus maximal bounded open intervals $G$ in the complement, and we denote their length by $\ell(G)$.

The simple idea is that small gaps are dangerous for the 
Lipschitz continuity on $K$ which is a necessary condition for $C^1$-extendability. In fact, we will show that $\C\neq \Cro$ whenever there are $\xi\in K$ and
nearby gaps of $K$ of length much smaller than the distance of the gap to $\xi$. To be precise, we define, for positive $\eps$,
\[
  \sigma_\eps(\xi)=\sup\left\{\frac{\sup\{|y-\xi|: y\in G\}}{\ell(G)}: G\subseteq (\xi-\eps,\xi+\eps) \text{ is a gap of $K$}\right\}
\]
where $\sup \emptyset=0$. Of course, these $[0,\infty]$-valued functions are increasing with respect to $\eps$ and thus we can define the {\it gap-structure function}
\[
  \sigma(\xi)=\lim_{\eps\to 0} \sigma_\eps(\xi).
\]

\begin{Thm}
For a compact set $K \subseteq \R$ we have  $\C=\Cro$ if and only if $\sigma(\xi) < \infty$ for all $\xi \in K$.
\end{Thm}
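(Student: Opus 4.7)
My plan is to prove both implications separately; in each direction, the crux is to control the ``jump'' $f(b)-f(a)$ across a gap $(a,b)$ of $K$ near a point $\xi_0 \in K$.

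\emph{Sufficiency.} Assume $\sigma(\xi) < \infty$ for every $\xi \in K$. Given $(f, df) \in \J$, I would show that $(f, df) \in \E$, so that $f \in \pi(\E) = \Cro$. I fix $\xi_0 \in K$ and $M > \sigma(\xi_0)$, choosing $\eps_0 > 0$ with $\sigma_{\eps_0}(\xi_0) < M$, so every gap $G = (a,b) \subseteq (\xi_0 - \eps_0, \xi_0 + \eps_0)$ satisfies $\max(|a - \xi_0|, |b - \xi_0|) \leq M(b-a)$. For $x < y$ in $K$ in a small neighborhood of $\xi_0$, I decompose $[x,y]$ into maximal subintervals of $K$ (``blocks'') separated by the gaps $(a_j, b_j)_{j=1}^N \subseteq [x,y]$. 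On each block the fundamental theorem of calculus (Theorem~\ref{thm:pathint}) combined with uniform continuity of $df$ on $K$ yields the Whitney estimate with error $\omega_{df}(|y-x|)(y-x)$. For each gap, subtracting the first-order expansions of $f$ at $\xi_0$ applied to $a_j$ and to $b_j$ gives
\[
|f(b_j) - f(a_j) - df(\xi_0)(b_j - a_j)| \leq \eta(|a_j - \xi_0| + |b_j - \xi_0|) \leq 2M\eta(b_j - a_j),
\]
where $\eta$ bounds the relative error of the expansion at $\xi_0$; swapping $df(\xi_0)$ for $df(x)$ costs an extra $\omega_{df}(\delta)(b_j - a_j)$ per gap. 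Summing yields $|f(y) - f(x) - df(x)(y-x)| \leq (2M+2)\eta(y-x)$ locally near $\xi_0$. Covering $K$ by finitely many such neighborhoods and invoking a Lebesgue-number argument would then globalize the estimate to the uniform Whitney condition.

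\emph{Necessity.} Assume $\sigma(\xi_0) = \infty$ for some $\xi_0 \in K$; I would construct $f \in \C$ that is not Lipschitz on $K$ near $\xi_0$, which is incompatible with $f \in \Cro$ since $C^1(\R)$-functions are locally Lipschitz. Because $\sigma_\eps(\xi_0) = \infty$ for every $\eps > 0$, I extract a subsequence of pairwise disjoint gaps $G_k = (a_k, b_k)$, all on one side of $\xi_0 = 0$ (say $0 < a_k < b_k$), with $b_k \searrow 0$, $b_k/\ell_k \geq k^2$, and $b_{k+1} \leq \ell_k/2$, where $\ell_k = b_k - a_k$. Setting $c_k = b_{k+1}/k$ (and $c_0 = 0$), I define $f:K \to \R$ by $f(x) = c_k$ for $x \in K \cap [b_{k+1}, a_k]$ and $f(x) = 0$ for $x \in K$ outside $(0, b_1)$. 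Each $a_k$ is right-isolated in $K$ and each $b_k$ left-isolated (the adjacent gap blocks approach from that side), so $f$ is constant in a $K$-neighborhood of every point of $K \setminus \{\xi_0\}$, hence differentiable there with $df \equiv 0$; at $\xi_0$ the bound $f(y)/y \leq c_k/b_{k+1} = 1/k$ (for $y \in K \cap [b_{k+1}, a_k]$) forces $df(\xi_0) = 0$, so $(f, 0) \in \J$ and $f \in \C$. On the other hand,
\[
\frac{f(b_k) - f(a_k)}{\ell_k} = \frac{c_{k-1} - c_k}{\ell_k} \geq \frac{b_k}{(k-1)\ell_k} - \frac{1}{2k} \geq \frac{k^2}{k-1} - \frac{1}{2k},
\]
which diverges as $k \to \infty$, so $f$ is not Lipschitz near $\xi_0$ and therefore $f \notin \Cro$.

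The principal obstacle is the uniformity in the sufficiency direction. The pointwise differentiability at each $a_j$ only yields $|f(b_j) - f(a_j) - df(a_j)\ell_j| = o(\ell_j)$ with a decay speed that may depend on $a_j$, so it cannot be summed across many gaps simultaneously. The key move is to expand instead at the nearby limit point $\xi_0$ and use $\sigma(\xi_0) < M$ to convert the individual distances $|a_j - \xi_0|$ and $|b_j - \xi_0|$ into multiples of the gap length $\ell_j$. The necessity construction confirms that this is exactly the right invariant: without a finite $\sigma(\xi_0)$, no such conversion is possible.
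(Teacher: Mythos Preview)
Your proposal is correct, and the necessity construction is essentially the paper's: both build a function that is locally constant on $K$, with values chosen so that $f(x)/|x-\xi_0|\to 0$ (giving $f\in\C$ with $df=0$) while the jumps $f(b_k)-f(a_k)$ across the selected gaps stay bounded below relative to $\ell_k$ (killing Lipschitz continuity). The bookkeeping differs slightly---the paper arranges $(f(b_n)-f(a_n))/\ell_n\ge 1$ rather than $\to\infty$---but the idea is identical.

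In the sufficiency direction, however, your route diverges from the paper's. You attack the Whitney remainder for a general $(f,df)\in\J$ by decomposing $[x,y]$ into blocks and gaps, using the fundamental theorem on each block and the expansion at $\xi_0$ across each gap, and then globalizing by a compactness/Lebesgue-number argument. The paper instead first \emph{reduces to the case $df=0$}: it extends $df$ to a continuous $\varphi$ on $\R$ and replaces $f$ by $g(x)=f(x)-\int_0^x\varphi$, so that $g'=0$ on $K$ and $g$ is constant on each component. After this reduction the block contributions vanish, and only a single gap---the one adjacent to $y$ after sliding $y$ to $\partial K$---is needed; the estimate collapses to two applications of the difference quotient bound at $\xi_0$. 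This avoids the infinite telescoping altogether and makes the local-to-global step immediate via Whitney's pointwise criterion $\lim_{x,y\to\xi}(f(x)-f(y))/(x-y)=f'(\xi)$.

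Your direct approach works, but note that the identity
\[
f(y)-f(x)-df(x)(y-x)=\sum_{\text{blocks}}[\cdots]+\sum_{\text{gaps}}[\cdots]
\]
is genuinely an infinite telescoping when $K\cap[x,y]$ has infinitely many components (e.g.\ $K=\{0\}\cup\{2^{-n}\}$), and needs a word of justification---say, by extending $df$ continuously to $[x,y]$, observing that $f$ minus the antiderivative is constant on components, and using continuity to pass to the limit. This is routine but should not be left implicit. The payoff of your approach is that it shows directly $(f,df)\in\E$ rather than going through Whitney's one-dimensional extension criterion; the paper's reduction buys a much shorter core estimate.
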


Before giving the proof let us discuss some examples. The Cantor set $K$ satisfies $\sigma(\xi)=\infty$ for all $\xi\in K$ so that $C^1(K)\neq C^1(\R|K)$.

Other simple examples are sets of the form $K=\{0\}\cup\{x_n:n\in\N\}$ for decreasing sequences $x_n\to 0$. Then $\sigma(x_n)=0$ for all $n\in\N$ and only the behaviour
of $\sigma(0)$ depends on the sequence. Since the gaps of $K$ are $(x_{n+1},x_n)$ we get $\sigma(0)=\limsup \frac{x_n}{x_n-x_{n+1}}$.
This is finite for fast sequences like $x_n=a^{-n}$ with $a>1$ but infinite for slower sequences like $x_n=n^{-p}$ for $p>0$.

This class of examples can be easily modified to topologically regular sets of the form $K=\{0\}\cup\bigcup_{n\in\N} [x_n,x_n+r_n]$. For $r_n=e^{-2n}$ we get 
$\sigma(0)<\infty$, e.g., for $x_n=e^{-n}$ and $\sigma(0)=\infty$ for $x_n=1/n$.

\begin{proof}
  We will use Whitney's \cite{Whi34b} characterization that $f\in \Cro$ if and only if, for all non-isolated $\xi\in K$,
  \[
    \lim_{x,y\to \xi} \frac{f(x)-f(y)}{x-y} =f'(\xi).
  \]
Let us first assume $\sigma(\xi)=\infty$ for some $\xi\in K$. There is thus a sequence of gaps $G_n=(a_n,b_n)\subseteq (\xi-1/n,\xi+ 1/n)$ with
$\sup\{|y-\xi|: y\in G_n\}/|a_n-b_n| >2n$. Passing to a subsequence, we may assume that all these gaps are on one the same side of $\xi$, say $\xi < a_n <b_n$, so that
$b_n-\xi > 2n (b_n-a_n)$. 

Moreover, again by passing to a subsequence and using $\sigma_\eps(\xi)=\infty$ for $\eps=(b_n-a_n)/2$, we can reach $b_{n+1}<a_n$ and that the 
midpoints $y_n=(a_n+b_n)/2$ of the gaps satisfy
\[
  \frac{y_n-y_{n+1}}{b_n-a_n} \ge n.
\]
We now define $f: K\to\R$ by $f(x)=(y_n-\xi)/n$ for $x\in K\cap (y_n,y_{n-1})$ (with $y_0=\infty$) and $f(x)=0$ for $x\le\xi$. Since the jumps of $f$ are outside
$K$ it is clear that $f$ is differentiable at all points $x\in K\setminus \{\xi\}$ with $f'(x)=0$. To show the differentiability at $\xi$ with $f'(\xi)=0$ we calculate for 
$x\in K\cap (y_n,y_{n-1})$
\[
  \left|\frac{f(x)-f(\xi)}{x-\xi}\right| = \left| \frac{(y_n-\xi)/n}{x-\xi}\right| \le \left|\frac{(y_n-\xi)/n}{y_n-\xi}\right| \le \frac 1n.
\]
Thus, $f\in C^1(K)$ but $f\notin \Cro$ because
\[
  \frac{f(b_n)-f(a_n)}{b_n-a_n}=\frac{(y_n-\xi)/n -(y_{n+1}-\xi)/(n+1)}{b_n-a_n} 
 \ge \frac{(y_n -y_{n+1})/n}{b_n-a_n} \ge 1.
\]

Let us now assume $\sigma(\xi)<\infty$ for all $\xi\in K$. To prove that every $f\in \C$ belongs to $\Cro$ we first show that we can assume $f'=0$. Indeed,
we extend $f':K\to\R$ to a continuous function $\varphi:\R\to\R$ and consider $g(x)=f(x)-\int_0^x \varphi(t) dt$. Then $g\in \C$ satisfies $g'=0$ and $g\in \Cro$ implies $f\in \Cro$.

Let us thus fix $f\in \C$ with $f'=0$. We have to show Whitney's condition stated above at any non-isolated point $\xi$ which, for notational convenience, we may assume to be $\xi=0$.
We fix $c>\max\{\sigma(0),1\}$ and $\eps\in(0,1)$.
There is thus $\delta>0$ such that, because of the differentiability at $\xi=0$ with $f'(0)=0$, we have 
\begin{align}\label{dq}
  \left|\frac{f(x)-f(0)}{x-0}\right| <\frac{\eps}{2c}
\end{align}
for all $x\in K$ with $|x|<\delta$ and, because of $\sigma_\delta(\xi)<c$ for small enough $\delta$, 
\[   
  \sup\{ |y|: y\in G\} \le c \ell(G)
\]
for all gaps $G\subseteq (-\delta,\delta)$.
For $x,y\in K\cap (-\delta,\delta)$ we will show 
\[
  \left|\frac{f(x)-f(y)}{x-y}\right| \le \eps.
\]
If $x,y$ are in the same component of $K$ this quotient is $0$ because $f$ is locally constant.
Moreover, if $x,y$ are on different sides of $0$, the quotient is bounded by $\eps$ because of (\ref{dq}) and $c\ge 1$. It remains to consider the case $0<x<y$. 
Then there is a gap $G$ between
$x$ and $y$ and, since $f$ is locally constant, we may decrease $y$ so that $y\in \partial K$ without changing $f(y)$ which thus increases the difference quotient we have to estimate.
This implies that $y$ is the endpoint of gap $G=(a,y)$ with $a\ge x$ which implies 
\[
  |y-x|\ge |y-a| =\ell(G) \ge y/c \ge x/c.
\]
Therefore,
\begin{align*}
  \left|\frac{f(x)-f(y)}{x-y}\right| & \le \left|\frac{f(x)-f(0)}{x-y}\right| +\left|\frac{f(y)-f(0)}{x-y}\right| \\
  &\le c\left|\frac{f(x)-f(0)}{x-0}\right|+c\left|\frac{f(y)-f(0)}{y-0}\right|\le \eps.
  \qedhere
\end{align*}

\end{proof}

\bigskip

\noindent \textbf{Acknowledgement.} Laurent Loosveldt's research was supported by a grant of the FNRS. The present paper was mainly written 
during a research stay of Laurent Loosveldt at Trier University, supported by the University of Liège, the University of the 
Greater Region and another grant of the FNRS.

\bibliographystyle{amsalpha}
\bibliography{C1K}

\end{document}